\newif\ifArxiv
\Arxivtrue

\ifArxiv	
\documentclass[11pt,a4paper]{article}
\usepackage{authblk}
\usepackage{cite}

\else		
\documentclass{gNST2e}
\usepackage{subfigure}
\fi

\usepackage{graphicx}
\usepackage{epstopdf} 
\usepackage{mleftright}
\usepackage{latexsym, amsmath, amsthm, amssymb}
\usepackage[usenames,dvipsnames]{color}
\usepackage{verbatim}
\usepackage{booktabs}
\usepackage{enumitem}
\usepackage{array}
\usepackage{slashbox}
\usepackage[T1]{fontenc}
\newcolumntype{L}[1]{>{\raggedright\let\newline\\\arraybackslash\hspace{0pt}}m{#1}}
\newcolumntype{C}[1]{>{\centering\let\newline\\\arraybackslash\hspace{0pt}}m{#1}}

\usepackage{algorithm}
\usepackage[]{algpseudocode}
\newcommand*\Let[2]{\State #1 $\gets$ #2}
\newcommand*\LCall[2]{#1$\Big($#2$\Big)$}

\DeclareMathOperator*{\argmax}{arg\,max}

\def\OP{{\rm OP}}
\def\CE{{\rm CE}}
\def\eCE{{\rm eCE}}
\def\CEofOP{{\rm CEofOP}}
\def\AR{{\rm AR}}
\def\NL{{\rm NL}}

\def\Likelihood{{\rm Lkl}}

\def\RMSE{\mathrm{RMSE}}

\theoremstyle{plain}
\newtheorem{theorem}{Theorem}
\newtheorem{lemma}[theorem]{Lemma}
\newtheorem{corollary}[theorem]{Corollary}
\newtheorem{proposition}[theorem]{Proposition}

\theoremstyle{definition}
\newtheorem{definition}{Definition}
\newtheorem{example}{Example}
\newtheorem*{remark}{Remark}

\newtheoremstyle{myExperiment}
{12pt}
{12pt}
{}
{}
{\bfseries}
{:}
{.5em}
{}

\theoremstyle{myExperiment}

{
\begin{list}{}%
{
\setlength{\leftmargin}{#1}}%
\item[]%
}
{\end{list}
}

\ifArxiv
\title{Change-point detection using the conditional entropy of ordinal patterns}
\author[1,2]{Anton M.~Unakafov\thanks{Corresponding address: Institute of Mathematics, University of L\"ubeck, Ratzeburger Allee 160, Building 64, 23562 L\"ubeck, Germany. e-mail: anton@math.uni-luebeck.de (A.M. Unakafov)}}
\author[1]{Karsten Keller}
\affil[1]{Institute of Mathematics, University of L\"ubeck}
\affil[2]{Graduate School for Computing in Medicine and Life Sciences, University of L\"ubeck}
\begin{document}
\maketitle

\begin{abstract}
\noindent This paper is devoted to change-point detection using only the ordinal structure of a time series.
A statistic based on the conditional entropy of ordinal patterns characterizing the local up and down in a time series is introduced and investigated.
The statistic requires only minimal a priori information on given data and shows good performance in numerical experiments.

\noindent{\bf Keywords}: Change-point detection; Conditional entropy; Ordinal pattern.
\end{abstract}

\else
\begin{document}
\title{Change-point detection using the conditional entropy of ordinal patterns}

\author{
\name{Anton M.~Unakafov\textsuperscript{a,b}$^{\ast}$\thanks{$^\ast$Corresponding author. Email: anton@math.uni-luebeck.de}
and Karsten Keller\textsuperscript{a}}
\affil{\textsuperscript{a}Institute of Mathematics, University of L\"ubeck, D-23562 L\"ubeck, Germany;
\textsuperscript{b}Graduate School for Computing in Medicine and Life Sciences, University of L\"ubeck, D-23562 L\"ubeck, Germany}
\received{December 2016}
}

\maketitle

\begin{abstract}
\noindent This paper is devoted to change-point detection using only the ordinal structure of a time series.
A statistic based on the conditional entropy of ordinal patterns characterizing the local up and down in a time series is introduced and investigated.
The statistic requires only minimal a priori information on given data and shows good performance in numerical experiments.
\end{abstract}

\begin{keywords}
Change-point detection; Conditional entropy; Ordinal pattern
\end{keywords}

\begin{classcode}62G99; 60G99; 62M10\end{classcode}
\fi

\section{Introduction}\label{intro}

Most of real-world time series are non-stationary, that is some of their properties change over time.
A model for some non-stationary time series is provided by a piecewise stationary stochastic process:
its properties are locally constant except for certain time-points called {\it change-points}, where some properties change abruptly \cite{BassevilleNikiforov1993}.
Detecting change-points is a classical problem and there are many methods for tackling it
\cite{BassevilleNikiforov1993,BrodskyDarkhovsky1993,CarlsteinMullerSiegmund1994, BrodskyDarkhovsky2000, LavielleTeyssiere2007}.
However, most of the existing methods have a common drawback:
they require certain a priori information about the time series.
It is necessary to know either a family of stochastic processes providing a model for the time series
(see for instance \cite{Davis2006} where AR processes are considered)
or at least to know which characteristics (mean, standard deviation, etc.) of the time series reflect the change
(see \cite{BrodskyDarkhovsky2000, Preuss2015}).
In real-world applications such information is often unavailable \cite{BrodskyDarkhovskyKaplanShishkin1999}.

Here we suggest a new method for change-point detection that requires minimal a priori knowledge:
we only assume that the changes affect the evolution rule linking the past of the process with its future
(a formal description of the considered processes is provided by Definition~\ref{StructureChanges_def}).
A natural example of such change is an alteration of the increments distribution.

Our method is based on ordinal pattern analysis, a promising approach to real-valued time series analysis
\cite{BandtPompe2002, BandtShiha2007, KellerSinnEmonds2007, Amigo2010, PompeRunge2011, UnakafovaKeller2013, KellerUnakafovUnakafova2014}.
In ordinal pattern analysis one considers order relations between values of a time series instead of the values themselves.
These order relations are coded by ordinal patterns; specifically,
an ordinal pattern of an order $d \in \mathbb{N}$ describes order relations between $(d+1)$ successive points of a time series.
The main step of ordinal pattern analysis is the transformation of an original time series into a sequence of ordinal patterns.
A result of this transformation is demonstrated in Figure~\ref{MotivatingExample_fig} for order $d = 1$.

\begin{figure}[!th]
\centering
\includegraphics[scale=0.136]{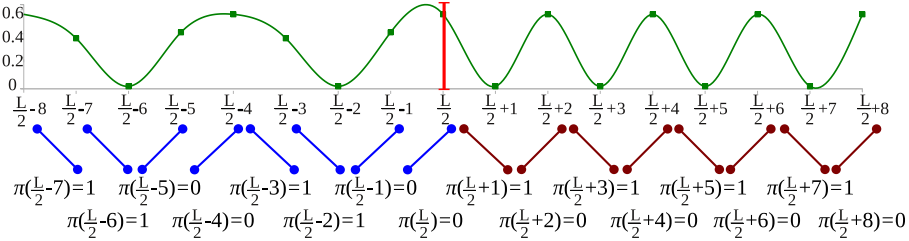}
\caption{A part of a piecewise stationary time series with a change-point at $t=L/2$ (marked by a vertical line) and corresponding ordinal patterns of order $d=1$ (below the plot)}
\label{MotivatingExample_fig}
\end{figure}

For detecting a change-point $t^\ast \in \mathbb{N}$ in a time series $x = \big(x(t)\big)_{t=0}^L$ with values in ${\mathbb R}$, one generally considers $x$ as a realization of a stochastic process $X$ and computes for $x$
a statistic $S(t; x)$ that should reach its maximum at $t = t^\ast$.
Here we suggest a statistic on the basis of the conditional entropy of ordinal patterns introduced in \cite{UnakafovKeller2014}.

Let us provide an `obvious' example only to motivate our approach and to illustrate its idea.
\begin{example}\label{CEofOPToyEx}
Consider a time series $\big(x(t)\big)_{t=0}^L$, its central part is shown in Figure~\ref{MotivatingExample_fig}.
The time series is periodic before and after $L/2$, but at $L/2$ there occurs a change (marked by a vertical line): the ``oscillations'' become faster.
Figure~\ref{MotivatingExample_fig} also presents the ordinal patterns $\pi (t)$ of order $d=1$ at times $t$ underlying the time series.
Note that there are only two ordinal patterns of order $1$: the increasing (coded by $0$) and the decreasing (coded by $1$).
Both ordinal patterns occur with the same frequency before and after the change-point.

However, the transitions between successive ordinal patterns are changed at $\frac{L}{2}$.
Indeed, before the change-point $L/2$ both ordinal patterns have two possible successors
(for instance, the ordinal pattern $\pi(L/2-5) = 0$ is succeeded by the ordinal pattern $\pi(L/2-4) = 0$, which in turn is succeeded by the ordinal pattern $\pi(L/2-3) = 1$),
whereas after the change-point the ordinal patterns $0$ and $1$ are alternating.
A measure of diversity of transitions between ordinal patterns is provided by the conditional entropy of them.
For the sequence $\big(\pi(k)\big)_{k=1}^L$ of ordinal patterns of order $1$ the (empirical) conditional entropy for $t = 2,3,\ldots,L$ is defined as follows:
\begin{align*}
\eCE\Big(\big(\pi(k)\big)_{k=1}^t\Big) = &-\sum_{i=0}^1\sum_{j=0}^1 \frac{n_{i,j}(t)}{t-1} \ln \frac{n_{i,j}(t)}{n_{i}(t)}\\
\text{with } n_{i,j}(t) &= \#\{l = 1,2,\ldots,t-1 \mid \pi(l) = i, \pi(l+1) = j \},\\
n_{i}(t)   &= \#\{l = 1,2,\ldots,t-1 \mid \pi(l) = i\}
\end{align*}
(throughout the paper, $0\ln 0 :=0$ and $0/0 :=0$, and $\# A$ denotes the number of elements of a set $A$).

To detect change-points we use a test statistic for $d = 1$ defined as follows:
\begin{align*}
\CEofOP(\theta L) =& (L - 2)\,\eCE\Big(\big(\pi(k)\big)_{k=1}^L\Big)\\
&- (\theta L - 1)\,\eCE\Big(\big(\pi(k)\big)_{k=1}^{\theta L}\Big)\\
&- \big(L -(\theta L + 1) \big)\,\eCE\Big(\big(\pi(k)\big)_{k=\theta L+1}^{L}\Big),
\end{align*}
for $\theta \in (0,1)$ with $\theta L \in \mathbb{N}$.
According to the properties of conditional entropy (see Section~\ref{SecCEofOPstat} for details), $\CEofOP(\theta L)$ attains its maximum when $\theta L$ coincides with a change-point.
Figure~\ref{CeofOPstat_ToyExample} demonstrates this for the time series from Figure~\ref{MotivatingExample_fig}.

\begin{figure}[!th]
\centering
\includegraphics[scale=0.4]{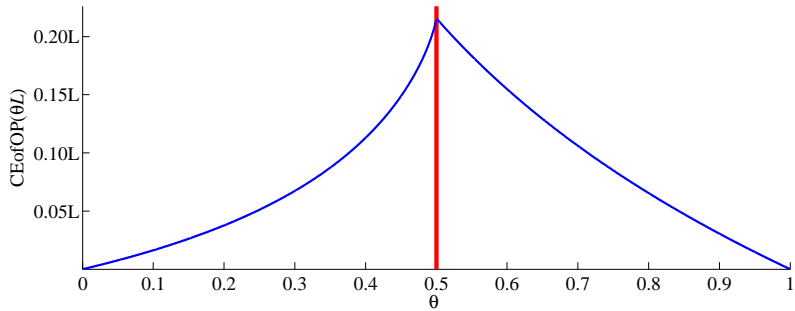}
\caption{Statistic $\CEofOP(\theta L)$ for the sequence of ordinal patterns of order $1$ for the time series from Figure~\ref{MotivatingExample_fig}}
\label{CeofOPstat_ToyExample}
\end{figure}
\end{example}

For simplicity and in view of real applications, in Example~\ref{CEofOPToyEx}  we define ordinal patterns and the $\CEofOP$ statistic immediately for concrete time series.
However, for theoretical consideration it is necessary to define the $\CEofOP$ statistic for stochastic processes. For this we refer to Section~\ref{SecCEofOPstat}.

To illustrate applicability of the $\CEofOP$ statistic let us discuss a real-world data example:

\begin{example}\label{CEofOPeegEx}
Here we consider EEG recording 14 from the sleep EEG dataset kindly provided by Vasil Kolev
(details and other results for this dataset are provided in \cite[Subsection~5.3.2]{Unakafov2015}).
We employ the following procedure for an automatic discrimination between sleep stages from the EEG time series:
first we split time series into pseudo-stationary intervals by finding change-points with the $\CEofOP$ statistic (change-points are detected in each EEG channel separately), then we cluster all the obtained intervals.
Figure~\ref{figureSleepEEG_OrdinalDiscrim} illustrates the outcome of the proposed discrimination for single EEG channel in comparison with the manual scoring by an expert;
the automated identification of a sleep type (waking, REM, light sleep, deep sleep) is correct for 79.6\% of 30-second epochs. Note that the borders of the segments (that is the detected change-points) in most cases correspond to the changes of sleep stage.
    \begin{figure}[!th]
      \hspace*{-2mm}
      \includegraphics[scale=0.52]{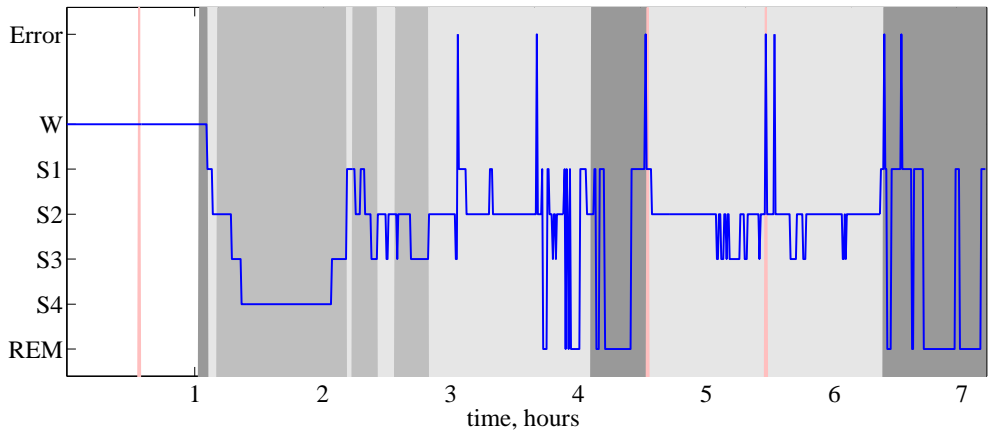}
      \caption{Hypnogram (bold curve) and the results of ordinal-patterns-based discrimination of sleep EEG. 
      		  Here ordinate axis represents the results of the expert classification: W stands for waking, stages S1, S2 and S3, S4 indicate light and deep sleep, respectively, REM stands for REM sleep and Error -- for unclassified samples. Results of ordinal-patterns-based discrimination are represented by the background colour:  	
              white colour indicates epochs classified as waking state, light gray -- as light sleep,
               gray -- as deep sleep, dark gray -- as REM, red colour indicates unclassified segments}
      \label{figureSleepEEG_OrdinalDiscrim}
    \end{figure}
\end{example}

The $\CEofOP$ statistic was first introduced in \cite{KellerUnakafovUnakafova2014},
where we have employed it as a component of a method for sleep EEG discrimination.
However no theoretical details of the method for change-point detection were provided there.
This paper aims to fill in this gap and provides a justification of the $\CEofOP$ statistic.

This paper is organized as follows.
In Section~\ref{SecPrelim} we provide a brief introduction into ordinal pattern analysis.
In particular, we define the conditional entropy of ordinal patterns and discuss its properties.
In Section~\ref{SecCEofOPstat} we investigate the properties of the $\CEofOP$ statistic.
We also suggest there a method for detecting multiple change-points via the $\CEofOP$ statistic.
Section~\ref{SecExperiments} is devoted to a comparison of this method with two other (classical and ordinal-patterns-based) methods for change-point detection
by performing experiments on realizations of piecewise stationary stochastic processes.
In Section~\ref{SecConcl} we summarize the results and state open problems.
Finally, in supplementary Section~\ref{sec_stationary} we investigate the asymptotic properties of the $\CEofOP$ statistic.

\section{Preliminaries}\label{SecPrelim}

Central objects of the following are stochastic processes $X = \big(X(t)\big)_{t=n}^m$ on a probability space $(\Omega, \mathcal{A}, \mathbb{P})$ with values in ${\mathbb R}$.
Here $n\in {\mathbb N}_0$ and $m\in {\mathbb N}\cup\{\infty\}$ allowing both finite and infinite lengths of processes. We consider only univariate stochastic processes to keep notation simple, though there are no principal restrictions on the dimension of a process. $X = \big(X(t)\big)_{t=n}^m$ is \emph{stationary} if for all $t_1,t_2,\ldots ,t_k,s$
with $t_1,t_2,\ldots ,t_k,t_1+s,t_2+s,\ldots ,t_k+s\in\{n,n+1,\ldots ,m\}$ the distributions of $(X_{t_i})_{i=1}^k$ and $(X_{t_i+s})_{i=1}^k$ coincide.

Throughout this paper we discuss detection of change-points in a piecewise stationary stochastic process.
Simply speaking, a piecewise stationary stochastic process is obtained by ``gluing'' several pieces of stationary stochastic processes
(for a formal definition of piecewise stationarity see, for instance, \cite[Section~3.1]{Stoffer2012}).

In this section we recall the basic facts from ordinal pattern analysis (Subsection~\ref{subsec_OPprocess}),
present the idea of ordinal-patterns-based change-point detection (Subsection~\ref{subsec_CPinOPdetect}),
and define the conditional entropy of ordinal patterns (Subsection~\ref{subsec_CE}).

\subsection{Ordinal patterns}\label{subsec_OPprocess}

Let us recall the definition of an ordinal pattern \cite{KellerSinnEmonds2007,UnakafovaKeller2013, KellerUnakafovUnakafova2014}.
For $d \in \mathbb{N}$ denote the set of permutations of $\lbrace 0, 1, \ldots, d\rbrace$ by $S_d$.
\begin {definition}\label{OrdPatternDef}\index{ordinal pattern}
We say that a real vector $(x_0, x_1, \ldots, x_d)$ has {\it ordinal pattern} $\OP(x_0, x_1, \ldots, x_d) = (r_0, r_1,\ldots, r_d) \in S_d$ {\it of order} $d \in {\mathbb N}$ if
\begin {equation*}
x_{r_0} \geq x_{r_1} \geq \ldots \geq x_{r_d}
\end {equation*}
and
\begin {equation*}
r_{l-1} > r_{l} \text{ for } x_{r_{l-1}} = x_{r_{l}}.
\end {equation*}
\end {definition}
As one can see there are $(d+1)!$ different ordinal patterns of order $d$.

\begin{definition}\label{OPseq}
Given a stochastic process $X = \big(X(t)\big)_{t=0}^L$ for $L\in {\mathbb N}\cup\{\infty\}$,
the sequence $\Pi^d = \big(\Pi(t)\big)_{t=d}^L$ with
\begin{equation*}
\Pi(t) = \OP\big(X(t-d), X(t-d+1), \ldots, X(t)\big)
\end{equation*}
is called the {\it abstract sequence of ordinal patterns} of order $d \in {\mathbb N}$ of the process $X$.
Similarly, given $x=(x(t))_{t=0}^L$ a realization of $X$, the {\it sequence of ordinal patterns} of order $d$ of $x$ is defined as
$\pi^{d,L} = \big(\pi(t)\big)_{t=d}^L$ with
\begin{equation*}
\pi(t) = \OP\big(x(t-d), x(t-d+1), \ldots, x(t)\big).
\end{equation*}
For simplicity, we say that $L$ in the case $L\in {\mathbb N}$ is the length of the sequence $\pi^{d,L}$ though, in fact, it consists of $(L-d+1)$ elements.
\end{definition}
\begin{definition}\label{OrdinalStationarity}
A stochastic process $X = \big(X(t)\big)_{t=0}^L$ for $L\in {\mathbb N}\cup\{\infty\}$ is said to be \emph{ordinal-$d$-stationary} if for all
$i \in S_d$ the probability $\mathbb{P}\big(\Pi(t) = i\big)$ does not depend on $t$ for $d\leq t\leq L$.
In this case we call
\begin{equation}\label{OPprob}
p_i = \mathbb{P}\big(\Pi(t) = i\big)
\end{equation}
the {\it probability of the ordinal pattern} $i \in S_d$ in $X$.
\end{definition}
The idea of ordinal pattern analysis is to consider the sequence of ordinal patterns and the ordinal patterns distribution obtained from it instead of the original time series.
Though implying the loss of all the metric information, this often allows to extract some relevant information from a time series, in particular, when it comes from a complex system.
For example, ordinal pattern analysis provides estimators of the Kolmogorov-Sinai entropy \cite{BandtKellerPompe2002, Keller2012, UnakafovKeller2014} of dynamical systems,
measures of time series complexity \cite{BandtPompe2002, Pompe2013, KellerUnakafovUnakafova2014},
measures of coupling between time series \cite{PompeRunge2011, HarunaNakajima2013}
and estimators of parameters of stochastic processes \cite{BandtShiha2007, SinnKeller2011},
see also \cite{Amigo2010, AmigoKeller2013} for a review of applications to real-world time series.
Methods of ordinal pattern analysis are invariant with respect to strictly-monotone distortions of time series \cite{KellerSinnEmonds2007}, do not need information about range of measurements, and are computationally simple 
\cite{UnakafovaKeller2013}. This qualifies it for application in the case that no much is known about the system behind a time series, possibly as a first exploration step.

For a discussion of the properties of a sequence of ordinal patterns we refer to \cite{BandtShiha2007, SinnKeller2008, SinnKeller2011, Bandt2014, ElizaldeMartinez2014}.
For the following we need two results stated below.
\begin{lemma}[Corollary~2 from \cite{SinnKeller2008}]\label{OPseqIsStat}
Each process $X = \big(X(t)\big)_{t\in {\mathbb N}_0}$ with associated stationary increment process $(X(t) - X(t-1))_{t\in \mathbb{N}}$ is ordinal-$d$-stationary for each $d\in {\mathbb N}$.
\end{lemma}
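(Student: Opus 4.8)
The plan is to exhibit, for each fixed $d\in{\mathbb N}$, a single measurable map that sends the block of $d$ consecutive increments ending at time $t$ to the ordinal pattern $\Pi(t)$, and then to transport stationarity of the increment process through this map. Ordinal-$d$-stationarity of $X$ will follow at once.

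First I would observe that, by Definition~\ref{OrdPatternDef}, the ordinal pattern of a vector $(x_0,x_1,\ldots,x_d)$ is determined by knowing, for every pair of indices $0\leq b<a\leq d$, whether $x_a>x_b$, $x_a=x_b$, or $x_a<x_b$ (the first clause of the definition fixes the order, the tie-breaking clause resolves equalities), hence solely by the signs of the differences $x_a-x_b$. Now put $x_k=X(t-d+k)$ for $k=0,1,\ldots,d$. Each difference telescopes,
\[
x_a-x_b=\sum_{k=b+1}^{a}\bigl(X(t-d+k)-X(t-d+k-1)\bigr),
\]
so it is a fixed linear functional (the same one for every $t$) of the increment vector
$
Y_t:=\bigl(X(t-d+1)-X(t-d),\,X(t-d+2)-X(t-d+1),\,\ldots,\,X(t)-X(t-1)\bigr)\in{\mathbb R}^d .
$
Consequently there is a map $\Phi\colon{\mathbb R}^d\to S_d$, \emph{independent of $t$}, with $\Pi(t)=\Phi(Y_t)$ for all $t\geq d$; $\Phi$ is constant on each of the finitely many cells into which ${\mathbb R}^d$ is partitioned by the hyperplanes $\bigl\{\sum_{k=b+1}^{a}y_k=0\bigr\}$, and is therefore measurable.

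Second, stationarity of the increment process $\bigl(X(t)-X(t-1)\bigr)_{t\in{\mathbb N}}$ means, taking $t_i=i$ for $i=1,\ldots,d$ and shift $s=t-d$, that the joint distribution of $Y_t$ does not depend on $t\geq d$. Hence for every $i\in S_d$,
\[
\mathbb{P}\bigl(\Pi(t)=i\bigr)=\mathbb{P}\bigl(\Phi(Y_t)=i\bigr)=\mathbb{P}\bigl(Y_t\in\Phi^{-1}(i)\bigr)
\]
is the same for all $t\geq d$, which is precisely the statement that $X$ is ordinal-$d$-stationary; by~\eqref{OPprob} this common value is $p_i$. Since $d\in{\mathbb N}$ was arbitrary, the claim holds for every order.

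The argument is essentially bookkeeping, and the only point deserving a moment's care is verifying that the tie-breaking clause $r_{l-1}>r_l$ for $x_{r_{l-1}}=x_{r_l}$ does not break the representation $\Pi(t)=\Phi(Y_t)$. It does not: whether $x_a=x_b$ is itself decided by the vanishing of the corresponding sum of increments, and the resolution of a tie uses only the window-indices $a,b\in\{0,1,\ldots,d\}$, which carry no dependence on absolute time. Thus $\Phi$ is genuinely $t$-independent, and this is really the whole content of the lemma.
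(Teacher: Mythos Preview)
Your argument is correct: the ordinal pattern of $(X(t-d),\ldots,X(t))$ is a fixed measurable function of the increment block $Y_t$, and stationarity of the increments makes the law of $Y_t$ independent of $t$, whence ordinal-$d$-stationarity. The tie-breaking remark is well taken and closes the only possible loophole.

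As for comparison: the paper does not give its own proof of this lemma. It is stated as a citation (Corollary~2 of \cite{SinnKeller2008}) and used as a black box. Your write-up therefore supplies what the paper omits, and the route you take---reducing to a $t$-independent map on increment vectors---is the natural one and essentially what the cited source does.
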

The next result is a direct consequence of \cite[Lemma~4]{UnakafovKeller2014} and generalizes the main result of \cite{Nagaraja1982}.
\begin{theorem}\label{OPseqIsMC}
If $X$ is a Markov chain with values in a two-element-set (contained in ${\mathbb R}$), then the corresponding abstract sequence $\Pi^d$ of ordinal patterns also forms a Markov chain for every order $d \in \mathbb{N}$.
\end{theorem}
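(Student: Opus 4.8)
The plan is to argue straight from the definitions, exploiting that an ordinal pattern depends only on the order relations among the sampled values. First I would normalise: relabel the two-element value set $\{a,b\}$ with $a<b$ as $\{0,1\}$, so that without loss of generality $X$ is a $\{0,1\}$-valued Markov chain with one-step transition matrix $P=(p_{ij})_{i,j\in\{0,1\}}$. For $t\geq d$ put $W(t):=\big(X(t-d),X(t-d+1),\ldots,X(t)\big)$; then $\Pi(t)=\OP\big(W(t)\big)$ is a deterministic function of $W(t)$, and $\big(W(t)\big)_{t\geq d}$ is itself a Markov chain because the only new coordinate $X(t+1)$ of $W(t+1)$ depends on $W(t)$ through its last entry alone. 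Thus $\Pi^d$ is a function of a Markov chain, and the task is to show that this particular function does not destroy the Markov property.

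The key step is to analyse the fibres of the map $\OP\colon\{0,1\}^{d+1}\to S_d$. For a binary word the ordinal pattern lists the positions of the $1$'s in decreasing order and then the positions of the $0$'s in decreasing order; the (at most one) ascent of the resulting permutation locates the split, so the word can be read back from it — unless the word is non-decreasing, in which case the pattern is the ``identity'' $(d,d-1,\ldots,0)$ and only the number of $1$'s is lost. Hence $\OP^{-1}(\sigma)$ is a singleton for every $\sigma\neq(d,d-1,\ldots,0)$, while $\OP^{-1}\big((d,d-1,\ldots,0)\big)=\{0^{k}1^{d+1-k}:0\leq k\leq d+1\}$. Consequently, if $\Pi(t)\neq(d,d-1,\ldots,0)$ then $W(t)$, and in particular $X(t)$, is recovered from $\Pi(t)$; since $W(t+1)$ is the known block $\big(X(t-d+1),\ldots,X(t)\big)$ followed by $X(t+1)$, and $X(t+1)$ is conditionally independent of the past $\big(\Pi(d),\ldots,\Pi(t)\big)$ given $X(t)$, the conditional law of $\Pi(t+1)$ given $\big(\Pi(d),\ldots,\Pi(t)\big)$ is already a function of $\Pi(t)$ alone in this case.

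It remains to handle $\Pi(t)=(d,d-1,\ldots,0)$, i.e.\ $X(t-d)\leq X(t-d+1)\leq\cdots\leq X(t)$, where $W(t)$ is only known to be one of the $d+2$ non-decreasing words. Here one must show that the conditional distribution of $W(t)$ given the whole ordinal-pattern history $\big(\Pi(d),\ldots,\Pi(t)\big)$ depends on that history only through $\Pi(t)$. This is the crux, and it is exactly where the two-element hypothesis and the structure of $P$ must enter; I expect it to be the main obstacle. Rather than re-derive it from scratch I would obtain it from Lemma~4 of \cite{UnakafovKeller2014}, of which the present statement is a direct consequence: one verifies that its hypotheses hold for a two-valued Markov chain and then assembles the two cases above into the Markov property for $\Pi^d$. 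For the order $d=1$ the argument specialises to the main result of \cite{Nagaraja1982}.
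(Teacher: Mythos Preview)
Your proposal is correct and lands in exactly the same place as the paper: the paper's entire proof is the one-line remark that the theorem is a direct consequence of Lemma~4 in \cite{UnakafovKeller2014}, which is precisely what you invoke for the crux case. Your fibre analysis of $\OP$ on $\{0,1\}^{d+1}$ and the self-contained handling of the non-identity case are correct and add useful detail the paper omits, but the underlying approach is the same.
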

In relation to this theorem, note that in general one does not need values in ${\mathbb R}$ but in a totally ordered set. It is shown in \cite{Nagaraja1982} and \cite[Subsection~3.3.3]{Unakafov2015} that if $X$ is a Markov chain in a finite set of more than two elements, $\Pi^d$ does not generally form a Markov chain.

Probability distributions of ordinal patterns are known only for some special cases of stochastic processes \cite{BandtShiha2007, SinnKeller2008, ElizaldeMartinez2014}.
In general one estimates probabilities of ordinal patterns by their empirical probabilities.
Consider a sequence $\pi^{d,L}$ of ordinal patterns.
For any $t \in \{d+1, d+2,\ldots, L\}$ the frequency of occurrence of an ordinal pattern $i \in S_d $ among the first $(t-d)$ ordinal patterns of the sequence is given by
\begin{equation}\label{OPfrequency}
   n_{i}(t) = \#\{l \in \{d, d+1, \ldots, t-1\} \mid \pi(l) = i \}.
\end{equation}
A natural estimator of the probability of an ordinal pattern $i$ in the ordinal-$d$-stationary case is provided by its relative frequency in the sequence $\pi^{d,L}$:
   \begin{equation*}
       \widehat{p}_i = \frac{n_{i}(L)}{L-d}.
  \end{equation*}

\subsection{Detecting change-points in the sequence of ordinal patterns}\label{subsec_CPinOPdetect}

Sequences of ordinal patterns are invariant to certain changes in the original stochastic process $X$,
such as shifts (adding a constant to the process) \cite[Subsection~3.4.3]{Amigo2010} and scaling (multiplying the process with a positive constant) \cite{KellerSinnEmonds2007}.
However, in many cases changes in the original process $X$ affect also the corresponding abstract sequences of ordinal patterns and ordinal patterns distributions.
On the one hand, this impedes application of ordinal pattern analysis to non-stationary time series
since most of ordinal-patterns-based quantities require ordinal-$d$-stationarity of a time series \cite{BandtPompe2002, Amigo2010, PompeRunge2011}
and may be unreliable when this condition fails.
On the other hand, one can detect change-points in the original process by detecting changes in the sequence of ordinal patterns.

First ideas of using ordinal patterns for detecting change-points were formulated in \cite{SinnGhodsiKeller2012, SinnKellerChen2013, Bandt2014, YuanWangHuangSha2015, SchnurrDehling2015}.
The advantage of the ordinal-patterns-based methods is that they require less information than most of the existing methods for change-point detection:
it is assumed neither that the stochastic process is from a specific family nor that the change affects specific characteristic of the process.
Instead, we consider further change-points with the following property.

\begin{definition}\label{StructureChanges_def}
Let $\big( X(t) \big)_{t=0}^L$ with $L\in {\mathbb N}\cup\{\infty\}$ be a piecewise stationary stochastic process with a change-point $t^\ast \in \mathbb{N}$.
We say that $t^\ast$ is a {\it ordinal change-point} if there exist some $m,n\in {\mathbb N}$ with $m<t^\ast<n\leq L$ and some $d\in {\mathbb N}$
such that $\big( X(t) \big)_{t=m}^{t^\ast}$ and $\big( X(t) \big)_{t^\ast+1}^n$ are ordinal-$d$-stationary but $\big( X(t) \big)_{t=m}^n$ is not.
\end{definition}
This approach seems to be natural for many stochastic processes and real-world time series.

\begin{remark}
Note that a change-point where the change in mean occurs, need not be ordinal,
since the mean is irrelevant for the distribution of ordinal patterns \cite[Subsection~3.4.3]{Amigo2010}.
However, there are many methods that effectively detect changes in mean;
the method proposed here is intended for use in a more complex case, when there is no classical method, or it is not clear, which of them to apply.
\end{remark}

We illustrate Definition~\ref{StructureChanges_def} by two examples.
Piecewise stationary autoregressive processes considered in Example~\ref{ExampleARwithChanges} are classical and provide models for linear time series.   
Since many real-world time series are non-linear, further we introduce in Example~\ref{ExampleNLwithChanges} a process originated from non-linear dynamical systems. 
These two types of processes are used throughout the paper for empirical investigation of change-point detection methods.

\begin{example}\label{ExampleARwithChanges}
A first order {\it piecewise stationary autoregressive $(\AR)$ process} with change-points $t^\ast_1, t^\ast_2, \ldots, t^\ast_{N_\text{st}-1}$ is defined as
\begin{equation*}
 \AR\big((\phi_1, \phi_2,\ldots, \phi_{N_\text{st}}), (t^\ast_1, t^\ast_2, \ldots, t^\ast_{N_\text{st}-1}) \big) = \big(\AR(t)\big)_{t=0}^L,
\end{equation*}
where $\phi_1, \phi_2,\ldots, \phi_{N_\text{st}} \in [0, 1)$ are the parameters of the autoregressive model and
\begin{equation*}
 \AR(t)	=  \phi_k \AR(t-1) + \epsilon(t),
\end{equation*}
for all $t \in \{t^\ast_{k-1}+1,t^\ast_{k-1}+2,\ldots,t^\ast_{k} \}$ for $k = 1,2,\ldots,N_\text{st}$, where $t^\ast_0:=0$ and $t^\ast_{N_\text{st}}:=L$,
with $\epsilon$ being the standard white Gaussian noise, and $\AR(0): = \epsilon(0)$.
$\AR$ processes are often used for the investigation of methods for change-points detection (see, for instance, \cite{SinnGhodsiKeller2012, SinnKellerChen2013}),
since they provide models for a wide range of real-world time series.
Figure~\ref{OPdistrLogistAR}a illustrates a realization of a `two piece' AR process with a change-point at $L/2$.
By \cite[Proposition~5.3]{BandtShiha2007} the distributions of ordinal patterns of order $d \geq 2$ reflect change-points for piecewise stationary AR processes.
Figure~\ref{OPdistrLogistAR}c illustrates this for the realization from Figure~\ref{OPdistrLogistAR}a:
empirical probability distributions of ordinal patterns of order $d = 2$ before and after the change-point $L/2$ differ significantly.
\end{example}

\begin{example}\label{ExampleNLwithChanges}
A classical example of non-linear system is provided by a logistic map on the unit interval:   
\begin {equation}\label{LogisticMapEq}
      x(t) = r x(t-1) \big(1-x(t-1)\big),
\end {equation}
with $t \in \mathbb{N}$, for $x(0) \in [0,1]$ and $r \in [1, 4]$. 
The behaviour of this map significantly varies for different value $r$; 
we are especially interested in $r \in [3.57, 4]$ with chaotic behaviour.
In the latter situation there exists an invariant ergodic measure absolutely continuous with respect to the Lebesgue measure \cite{Thunberg2001, Lyubich2012},
therefore \eqref{LogisticMapEq} defines a stationary stochastic process $\NL_0$:
\begin{equation*}
	\NL_0(t) = r \big(1-\NL_0(t-1)\big) \NL_0(t-1),
\end{equation*}
with $\NL_0(0)\in [0,1]$ being a uniformly distributed random number. 
Note that for almost all $r \in [3.57, 4]$ (more generally, for all $r \in [0, 4]$), either the map $\NL_0$ is chaotic or hyperbolic roughly meaning that an attractive periodic orbit is dominating it. This is a deep result in one-dimensional dynamics (see \cite{Lyubich2012} for details). In the hyperbolic case after some transient behaviour numerically one only sees some periodic orbit, which has long periods in the interval $r \in [3.57, 4]$. From the practical viewpoint, i.e. when considering short orbits, dynamics for that interval can be considered as chaotic since already small changes of $r$ provide chaotic behaviour also in the theoretical sense.

Let us include some {\em observational noise} by adding standard white Gaussian noise $\epsilon$ to an orbit:
\begin{equation*}
	\NL(t) = \NL_0(t) + \sigma \epsilon(t),
\end{equation*}
where $\sigma >0$ is the level of noise.

Orbits of logistic maps, particularly with observational noise, are often used as a studying and illustrating tool of non-linear time series analysis (see \cite{LinzLuecke1986, Diks1999}).
This justifies as a natural object for study a {\it piecewise stationary noisy logistic $(\NL)$ process} with change-points $t^\ast_1, t^\ast_2, \ldots, t^\ast_{N_\text{st}-1}$, defined as
\begin{align*}
\NL\big((r_1, \ldots, r_{N_\text{st}}), (\sigma_1, \ldots, \sigma_{N_\text{st}}), (t^\ast_1, t^\ast_2, \ldots, t^\ast_{N_\text{st} - 1}) \big) = \big(\NL(t)\big)_{t=0}^L,
\end{align*}
where $r_1, \ldots, r_{N_\text{st}} \in [3.57, 4]$ are the values of control parameter, $\sigma_1, \ldots, \sigma_{N_\text{st}} > 0$ are the levels of noise, and
\begin{equation*}
\NL(t)	=  \NL_0(t) + \sigma_k \epsilon(t),
\end{equation*}
with
\begin{equation*}
\NL_0(t)	=  r_k \big(1-\NL_0(t-1)\big) \NL_0(t-1),
\end{equation*}
for all $t \in \{t^\ast_{k-1}+1,t^\ast_{k-1}+2,\ldots,t^\ast_{k}\}$ for $k = 1,2,\ldots,N_\text{st}$, with $t^\ast_0:=0$, $t^\ast_{N_\text{st}}:=L$ and $\NL_0(0) = \epsilon(0)$.

Figure~\ref{OPdistrLogistAR}b shows a realization of a `two-piece' NL process with a change-point at $L/2$;
as one can see in Figure~\ref{OPdistrLogistAR}d, the empirical distributions of ordinal patterns of order $d = 2$ before the change-point and after the change-point do not coincide.
In general, the distributions of ordinal patterns of order $d \geq 1$ reflect change-points for the NL processes (which can be easily checked).

\end{example}

\begin{figure}[h]
\begin{minipage}[h]{0.49\hsize}
\centering
\includegraphics[scale=0.50]{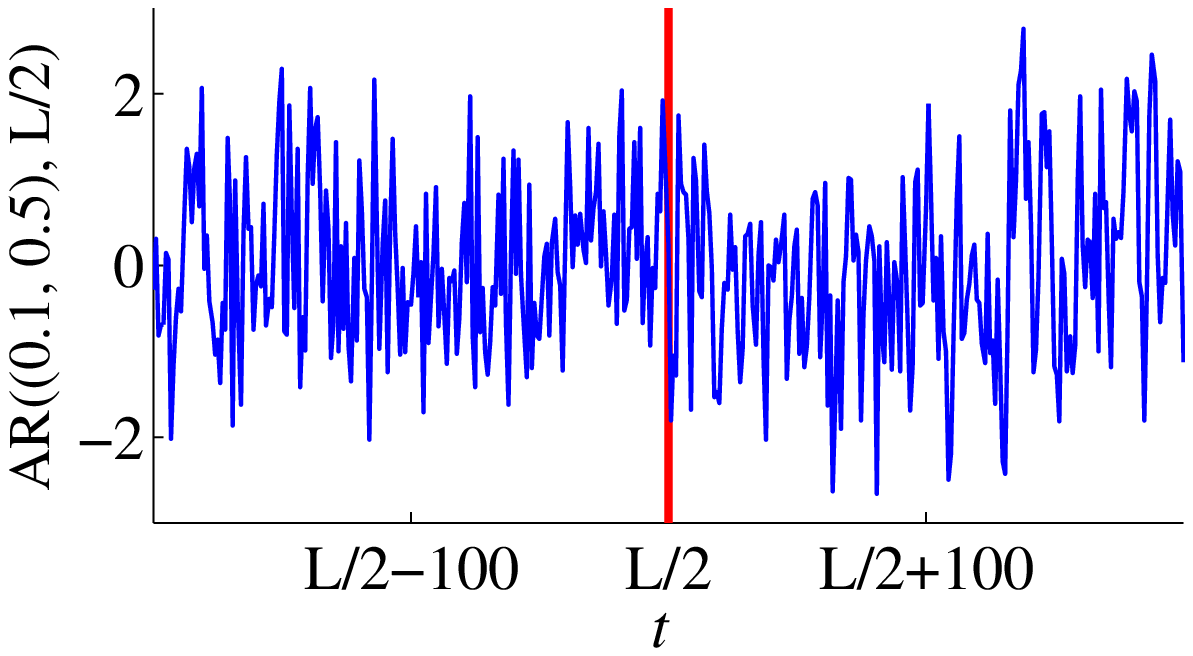}
	
\hspace{9mm}(a)

\includegraphics[scale=0.47]{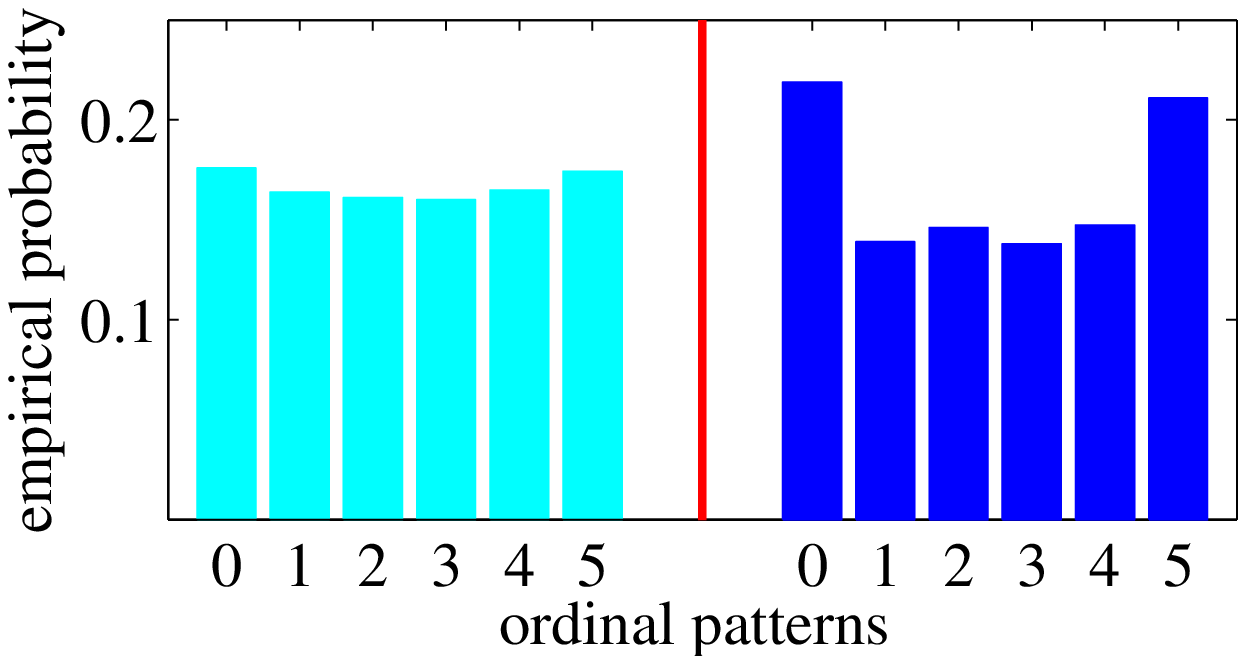}
	
\hspace{9mm}(c)

\end{minipage}
\begin{minipage}[h]{0.49\hsize}
\centering
\includegraphics[scale=0.50]{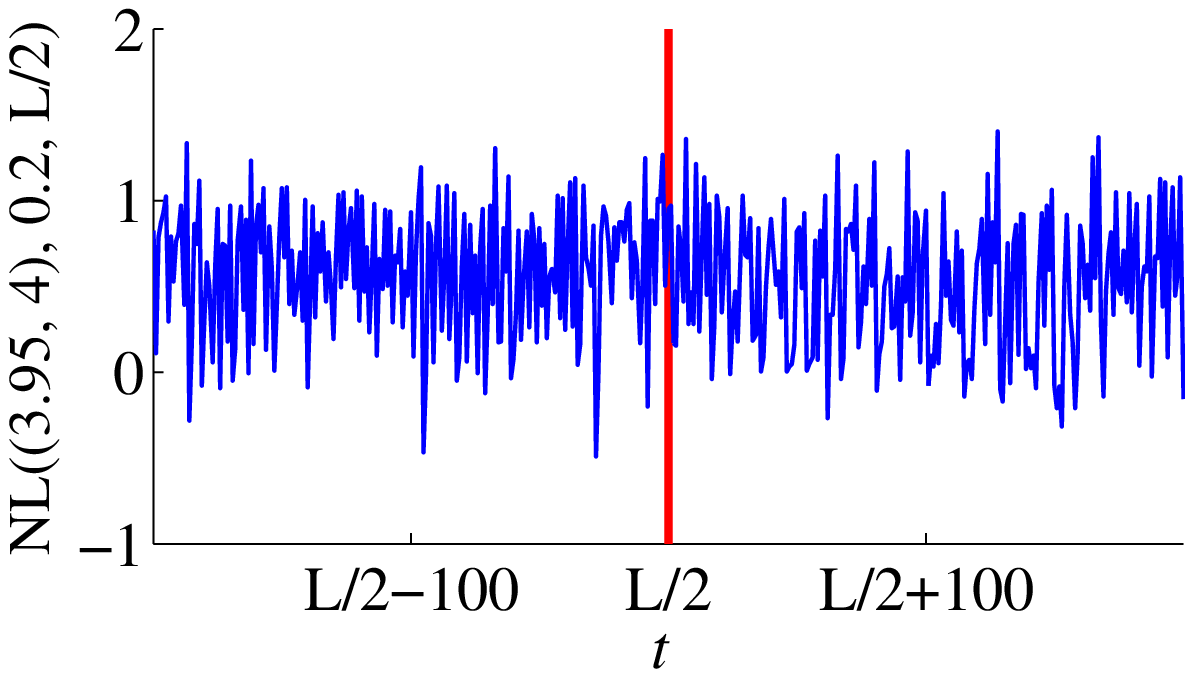}
	
\hspace{9mm}(b)

\includegraphics[scale=0.47]{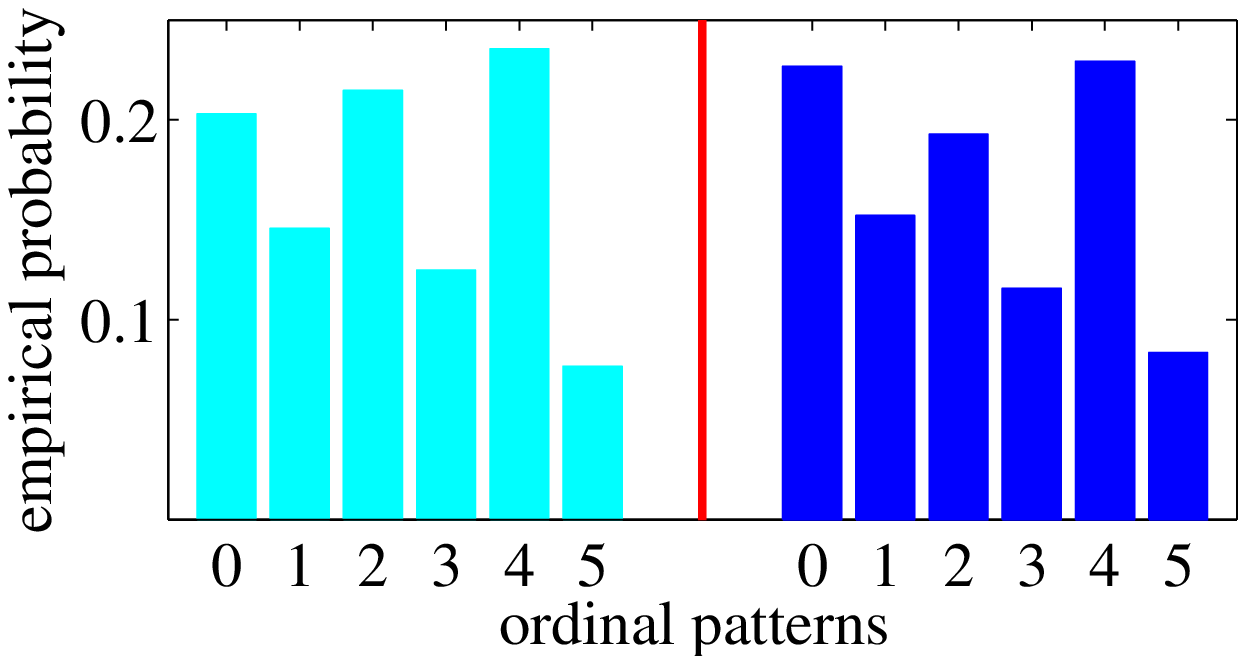}
	
\hspace{9mm}(d)
\end{minipage}
\caption{Upper row: parts of realizations of AR (a) and NL (b) process with change-points marked by vertical lines, $L=20000$.
Lower row: empirical probability distributions of ordinal patterns of order $d = 2$ before and after the change-point for the realizations of AR (c) and NL (d) process}
\label{OPdistrLogistAR}
\end{figure}

The NL and AR processes considered have rather different ordinal patterns distributions, being the reason for using them for empirical investigation of change-point detection methods in Section~\ref{SecExperiments}.

We now consider the classical problem of detecting a change-point $t^\ast$ on the basis of a realization $x$ of a stochastic process $X$ having at most one change-point, that is it holds either $N_\text{st} = 1$ or $N_\text{st} = 2$ (compare \cite{CarlsteinMullerSiegmund1994}).
To solve this problem one estimates a tentative change-point $\widehat{t}^\ast$ as the time-point that maximizes a test statistic $S(t; x)$.
Then the value of $S(\widehat{t}^\ast; x)$ is compared to a given threshold in order to decide whether $\widehat{t}^\ast$ is a change-point.

The idea of {\it ordinal change-point detection} is to find change-points in a stochastic process $X$ by detecting changes in the sequence $\pi^{d,L}$ of ordinal patterns for a realization of $X$.
Given at most one ordinal change-point $t^\ast$ in $X$, one estimates its position $\widehat{t}^\ast$ by using the fact that
\begin{itemize}
\item $\pi(d), 	\pi(d + 1),      \ldots, \pi(t^\ast)$     characterize the process before the change;
\item $\pi(t^\ast+1),  \pi(t^\ast+2),   \ldots, \pi(t^\ast+d-1)$ correspond to the transitional state;
\item $\pi(t^\ast+d),  \pi(t^\ast+d+1), \ldots, \pi(L)$ 	  characterize the process after the change.
\end{itemize}
Therefore, a position of a change-point can be estimated by an ordinal-patterns-based statistic $S(t; \pi^{d,L})$ that,
roughly speaking, measures dissimilarity between the distributions of ordinal patterns
for $\big(\pi(k)\big)_{k=d}^t$ and for $\big(\pi(k)\big)_{k=t+d}^L$.

Then an estimate of the change-point $t^\ast$ is given by
\begin{equation*}
\widehat{t}^\ast = \argmax_{t = d,d+1,\ldots,L} S(t; \pi^{d,L}).
\end{equation*}

A method for detecting one change-point can be extended to an arbitrary number of change-points using the binary segmentation \cite{Vostrikova1981}:
one applies a single change-point detection procedure to the realization $x$; if a change-point is detected then it splits $x$ into two segments in each of which one is looking for a change-point.
This procedure is repeated iteratively for the obtained segments until all of them either do not contain change-points or are too short.

The key problem in this paper is the selection of an appropriate ordinal-patterns-based test statistic $S(t; \pi^{d,L})$ for detecting changes.
We suggest an appropriate test statistic in Section~\ref{SecCEofOPstat}, but first we introduce in Subsection~\ref{subsec_CE} the conditional entropy of ordinal patterns being the cornerstone of this statistic.

\subsection{Conditional entropy of ordinal patterns}\label{subsec_CE}

Let us call a process $X = \big(X(t)\big)_{t=0}^L$ for $L\in {\mathbb N}\cup\{\infty\}$ \emph{ordinal-$d^+$-stationary} if for all $i,j\in S_d$ the {\it probability of pairs of ordinal patterns}
\begin{equation*}
p_{i,j}=\mathbb{P}\big(\Pi(t) = i,\Pi(t+1) = j\big)
\end{equation*}
does not depend on $t$ for $d\leq t \leq L-1$ (compare with Definition~\ref{OrdinalStationarity}).
Obviously, ordinal-$d+1$-stationarity implies ordinal-$d^+$-stationarity.

For an ordinal-$d^+$-stationary stochastic process,
consider the probability of an ordinal pattern $j \in S_d$ to occur after an ordinal pattern $i \in S_d$; similarly to \eqref{OPprob}, it is given by:
\begin{equation*}
p_{j|i}={\mathbb P}\big(\Pi(t+1) = j \mid \Pi(t) = i\big) = \frac{p_{i,j}}{p_i}
\end{equation*}
for $p_i\neq 0$. If $p_i=0$, let $p_{j|i}=0$.

\begin{definition}\label{CEdef}
The {\it conditional entropy of ordinal patterns} of order $d\in \mathbb{N}$ of an ordinal $d^+$-stationary stochastic process $X$ is defined by:
\begin {align}
  \CE(X, d) &= - \sum_{i \in S_d}\sum_{j \in S_d} p_{i}p_{j|i}\,\ln (p_{i}p_{j|i}) + \sum_{i \in S_d} p_{i}\,\ln p_{i} \nonumber\\
	    &= - \sum_{i \in S_d}\sum_{j \in S_d} p_{i}p_{j|i}\,\ln p_{j|i}.  \label{CEdefEq}
\end {align}
\end{definition}
For brevity, we refer to $\CE(X, d)$ as the ``conditional entropy'' when no confusion can arise.
The conditional entropy characterizes the mean diversity of successors $j \in S_d$ of a given ordinal pattern $i \in S_d$.
This quantity often provides a good practical estimation of the Kolmogorov-Sinai entropy for dynamical systems;
for a discussion of this and other theoretical properties of conditional entropy we refer to \cite{UnakafovKeller2014}.

One can estimate the conditional entropy from a time series by using the empirical conditional entropy of ordinal patterns \cite{KellerUnakafovUnakafova2014}.
Consider a sequence $\pi^{d,L}$ of ordinal patterns of order $d \in \mathbb{N}$ with length $L \in \mathbb{N}$.
Similarly to \eqref{OPfrequency}, the frequency of occurrence of an ordinal patterns pair $i,j \in S_d$ is given by
\begin{equation}\label{OPpairsFreq}
   n_{i,j}(t) = \#\{l \in \{d, d+1, \ldots, t-1\} \mid \pi(l) = i, \pi(l+1) = j \}
\end{equation}
for $t \in \{d+1, d+2,\ldots,L\}$.
The {\it empirical conditional entropy of ordinal patterns} for $\pi^{d,L}$ is defined by
 \begin{align}
   \eCE\big(\pi^{d,L}\big) =&-\frac{1}{L-d}\sum_{i \in S_d}\sum_{j \in S_d} n_{i,j}(L)\ln       n_{i,j}(L) +\frac{1}{L-d}\sum_{i \in S_d} n_{i}(L)\ln n_{i}(L)\nonumber\\
			  = &-\frac{1}{L-d}\sum_{i \in S_d}\sum_{j \in S_d} n_{i,j}(L)\ln \frac{n_{i,j}(L)}{n_{i}(L)}\label{empiricalCEeq}.
 \end{align}

As a direct consequence of Lemma~\ref{OPseqIsStat} the empirical conditional entropy approaches the conditional entropy under certain assumptions,
namely it holds the following.
\begin{corollary}\label{CEempiricalConv}
For the sequence $\pi^{d,\infty}$ of ordinal patterns of order $d \in \mathbb{N}$ of a realization of an ergodic stochastic process $X = \big(X(t) \big)_{t\in \mathbb{N}_0}$ with associated stationary increment process $(X(t) - X(t-1))_{t\in \mathbb{N}}$,
it holds almost surely that
\begin{equation}\label{CEempiricalSSP}
\lim_{L \to \infty}\eCE\Big(\big(\pi(k)\big)_{k=d}^L\Big) = \CE(X, d).
\end{equation}
\end{corollary}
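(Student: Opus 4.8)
The plan is to derive \eqref{CEempiricalSSP} from Birkhoff's ergodic theorem applied to the sequence of ordinal patterns, which inherits stationarity and ergodicity from $X$, combined with the continuity of the entropy function $u\mapsto u\ln u$. Before doing so I would first check that the right-hand side $\CE(X,d)$ is meaningful, i.e.\ that $X$ is ordinal-$d^+$-stationary. Since the increment process $\bigl(X(t)-X(t-1)\bigr)_{t\in\mathbb{N}}$ is stationary, Lemma~\ref{OPseqIsStat} shows that $X$ is ordinal-$e$-stationary for every $e\in\mathbb{N}$, in particular ordinal-$(d+1)$-stationary; and since the pair $\bigl(\Pi(t),\Pi(t+1)\bigr)$ of order-$d$ patterns is a fixed deterministic function of the single order-$(d+1)$ pattern $\OP\bigl(X(t-d),\dots,X(t+1)\bigr)$, the law of this pair does not depend on $t$. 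Hence $X$ is ordinal-$d^+$-stationary and the quantities $p_i,\,p_{i,j},\,p_{j|i}$ and $\CE(X,d)$ from Subsection~\ref{subsec_CE} are well defined; note also that $\sum_{j\in S_d}p_{i,j}=p_i$ and, correspondingly, $\sum_{j\in S_d}n_{i,j}(L)=n_i(L)$.

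The core step is the almost sure convergence of the empirical frequencies. For fixed $i,j\in S_d$ the indicator $\mathbf{1}\bigl[\Pi(l)=i,\ \Pi(l+1)=j\bigr]$ depends on $X(l-d),\dots,X(l+1)$ only through their order relations, hence only through the increments $X(l-d+1)-X(l-d),\dots,X(l+1)-X(l)$; thus $\bigl(\mathbf{1}[\Pi(l)=i,\Pi(l+1)=j]\bigr)_{l\geq d}$ is a factor of the increment process, which is stationary by hypothesis and ergodic — here I read the ergodicity assumption on $X$ as ergodicity of this increment process — and ergodicity passes to factors. Birkhoff's ergodic theorem (equivalently, the strong law of large numbers for stationary ergodic sequences) then yields, almost surely,
\[
\frac{n_{i,j}(L)}{L-d}=\frac{1}{L-d}\sum_{l=d}^{L-1}\mathbf{1}\bigl[\Pi(l)=i,\ \Pi(l+1)=j\bigr]\ \longrightarrow\ p_{i,j}
\]
as $L\to\infty$; and, by summing over $j\in S_d$, also $\frac{n_i(L)}{L-d}\to p_i$ almost surely.

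Finally I would pass to the limit in the second line of \eqref{empiricalCEeq}. Put $\phi(u):=u\ln u$ for $u\in[0,1]$ with $\phi(0):=0$, so that $\phi$ is continuous with $\phi([0,1])=[-e^{-1},0]$. Writing $n_{i,j}(L)\ln\frac{n_{i,j}(L)}{n_i(L)}=n_i(L)\,\phi\!\bigl(\frac{n_{i,j}(L)}{n_i(L)}\bigr)$ (and using the conventions $0/0:=0$, $0\ln 0:=0$), we get
\[
\eCE\bigl(\pi^{d,L}\bigr)=-\sum_{i\in S_d}\sum_{j\in S_d}\frac{n_i(L)}{L-d}\,\phi\!\left(\frac{n_{i,j}(L)}{n_i(L)}\right).
\]
Fix $i,j\in S_d$. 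If $p_i>0$, then almost surely $\frac{n_i(L)}{L-d}\to p_i$ and $\frac{n_{i,j}(L)}{n_i(L)}=\frac{n_{i,j}(L)/(L-d)}{n_i(L)/(L-d)}\to\frac{p_{i,j}}{p_i}=p_{j|i}$, so by continuity of $\phi$ the summand converges almost surely to $p_i\,\phi(p_{j|i})=p_i p_{j|i}\ln p_{j|i}$; if $p_i=0$, then almost surely $\frac{n_i(L)}{L-d}\to 0$ while $\phi$ stays bounded, so the summand converges almost surely to $0$, which again equals $p_i p_{j|i}\ln p_{j|i}$ under the conventions. Since there are only $\bigl((d+1)!\bigr)^2$ pairs, it follows that almost surely
\[
\lim_{L\to\infty}\eCE\bigl(\pi^{d,L}\bigr)=-\sum_{i\in S_d}\sum_{j\in S_d}p_i p_{j|i}\ln p_{j|i}=\CE(X,d)
\]
by \eqref{CEdefEq}, which is \eqref{CEempiricalSSP}. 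The only genuinely delicate point in this plan is the ergodicity transfer: making precise that the ergodicity hypothesis on $X$ forces the associated stationary increment process, and hence the factor process of pair indicators, to be ergodic, so that the Birkhoff limits are the deterministic constants $p_{i,j}$ and not merely random conditional expectations; granting this, the remainder is just the ergodic theorem together with the continuity and boundedness of $u\mapsto u\ln u$.
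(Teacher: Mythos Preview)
Your proposal is correct and in fact supplies considerably more detail than the paper itself, which does not give a proof but simply declares the corollary a ``direct consequence of Lemma~\ref{OPseqIsStat}''. Your argument is exactly the natural way to unpack that remark: Lemma~\ref{OPseqIsStat} yields ordinal-$(d{+}1)$-stationarity and hence ordinal-$d^+$-stationarity, and then Birkhoff's ergodic theorem for the stationary ergodic increment process gives the almost sure convergence of the empirical pair frequencies, after which continuity and boundedness of $u\mapsto u\ln u$ on $[0,1]$ finish the job.

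The one point you flag --- that ``ergodic'' must be read as ergodicity of the stationary increment process, so that the factor process of pair indicators is ergodic and the Birkhoff limits are the constants $p_{i,j}$ --- is indeed the only place where care is needed, and your reading is the intended one: $X$ itself need not be stationary (think of a random walk), so ergodicity can only sensibly refer to the increment process, and factors of a stationary ergodic process are again ergodic. With that clarified, your proof is complete.
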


\section{Statistic for change-point detection based of on the conditional entropy of ordinal patterns}\label{SecCEofOPstat}

We suggest to use the following statistic for detecting ordinal change-points of a process on the basis
of a sequence $\pi^{d,L}$ of ordinal patterns for $d, L \in \mathbb{N}$ of a realization of the process:
\begin{align}
\CEofOP\big(t; \pi^{d,L}\big) =&\,\,(L - 2d)\,\eCE\Big(\big(\pi(k)\big)_{k=d}^L\Big) \nonumber \\
&-   (t - d)\,\eCE\Big(\big(\pi(k)\big)_{k=d}^t\Big) \label{CEofOPstat} \\
&- \big(L -(t + d) \big)\,\eCE\Big(\big(\pi(k)\big)_{k=t+d}^L\Big).         \nonumber
\end{align}
for all $t\in \mathbb{N}$ with $d < t < L-d$.
The intuition behind this statistic comes from the concavity of conditional entropy (not only for ordinal patterns but in general, see \cite[Subsection~2.1.3]{HanKobayashi2002}).
It holds
\begin{align}
\eCE\Big(\big(\pi(k)\big)_{k=d}^L\Big) &\geq \frac{t - d}  {L - 2d}\, \eCE\Big(\big(\pi(k)\big)_{k=d}^t\Big)	\label{CEofOPJustif} \\
&+    \frac{L -(t + d)}{L - 2d}\, \eCE\Big(\big(\pi(k)\big)_{k=t+d}^L\Big). 		\nonumber
\end{align}
Therefore, if the probabilities of ordinal patterns change at some point $t^\ast$, but do not change before and after $t^\ast$, then $\CEofOP\big(t; \pi^{d,L}\big)$ tends to attain its maximum at $t = t^\ast$. If the probabilities do not change at all, then for $L$ being sufficiently large, \eqref{CEofOPJustif} tends to hold with equality
(see Corollary~\ref{CEofOP_nochange} in Section~\ref{sec_stationary}).

Let $n_{i}(t)$ and $n_{i,j}(t)$ be the frequencies of occurrence of an ordinal pattern $i \in S_d$ and of an ordinal patterns pair $i,j \in S_d$
(given by \eqref{OPfrequency} and \eqref{OPpairsFreq}, respectively).
Set $m_{i}(t) = n_{i}(L)- n_{i}(t + d)$ and $m_{i,j}(t)= n_{i,j}(L) - n_{i,j}(t + d)$,
then using \eqref{empiricalCEeq} we rewrite \eqref{CEofOPstat} in a straightforward form:
\begin{align}
\CEofOP\big(t; \pi^{d,L}\big) =&- \frac{L - 2d}{L - d} \sum_{i \in S_d}\sum_{j \in S_d} n_{i,j}(L) \ln \frac{n_{i,j}(L)}{n_{i}(L)} \label{CEofOPstraight}\\
&+ \sum_{i \in S_d}\sum_{j \in S_d} n_{i,j}(t) \ln \frac{n_{i,j}(t)}{n_{i}(t)}		
+ \sum_{i \in S_d}\sum_{j \in S_d} m_{i,j}(t) \ln \frac{m_{i,j}(t)}{m_{i}(t)}.	\nonumber
\end{align}
This statistic was first introduced and applied to the segmentation of sleep EEG time series in \cite{KellerUnakafovUnakafova2014}.

In the rest of this section, we present a motivating example (Subsection~\ref{subsec_motivating}),
show the relation of the $\CEofOP$ statistic to the likelihood ratio statistic for a piecewise stationary Markov chain (Subsection~\ref{subsec_Markov})
and formulate an algorithm for detecting multiple change-points by means of the CEofOP statistic in Subsection~\ref{subsecCPdetectionMethod}.

\subsection{Motivating example}\label{subsec_motivating}

We demonstrate a ``non-linear'' nature of the $\CEofOP$ statistic by means of Example~\ref{CEofOPsurrogateEx} concerning transition  from a time series to its surrogate. 
Although being in a sense tailor-made, this example shows that $\CEofOP$ discerns changes that cannot be detected by conventional ``linear'' methods.
 
\begin{remark}
The question whether a time series is linear or non-linear often arises in data analysis.
For instance, linearity should be verified before using such powerful methods as Fourier analysis. 
For this one usually employs a procedure known as surrogate data testing \cite{Theiler1992,SchreiberSchmitz1996,SchreiberSchmitz2000}. 
It utilises the fact that a linear time series is statistically indistinguishable from any time series sharing some of its properties (for instance, second moments and amplitude spectrum).
Therefore one can generate surrogates having the certain properties of the original time series without preserving other properties, irrelevant for a linear system. 
If such surrogates are significantly different from the original series then non-linearity is assumed.
\end{remark}

\begin{example}\label{CEofOPsurrogateEx}
Consider a time series consisting of a realisation of a noisy logistic process $\NL\big(r, \sigma\big)$ of length $L/2$ without changes, glued with its surrogate of the same length (to generate surrogates we use the iterative AAFT algorithm suggested by \cite{SchreiberSchmitz1996} and implemented by \cite{Gautama2005}).
This compound time series has a change-point at $t^\ast = L/2$, which conventional methods may fail to detect since the surrogate has the same autocorrelation function as the original process (for instance this is the case for the Brodsky-Darkhovsky method considered further in Section~\ref{SecExperiments}). 
However, the ordinal pattern distributions for the original time series and its surrogate generally are significantly different.
Therefore the $\CEofOP$ statistic detects the change-point, which is illustrated by Figure~\ref{surrogateTest_Figure}.

\begin{figure}[!th]
\centering
\includegraphics[scale=0.45]{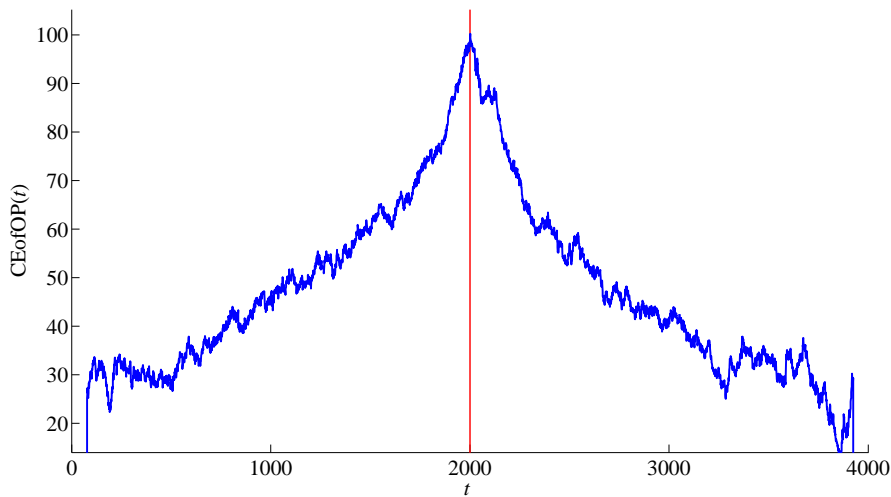}
\caption{Statistic $\CEofOP(\theta L)$ for a time series, obtained by ``gluing'' a realization of a noisy logistic stochastic process $\NL\big(4, 0.2\big)$ with its surrogate at $t^\ast = 2000$}
\label{surrogateTest_Figure}
\end{figure}
\end{example}

\begin{remark}
Although the idea that ordinal structure is a relevant indicator of time series linearity/non-linearity is not new
\cite{BandtPompe2002, Amigo2010}, to our knowledge, it was not rigorously proved that the distribution of ordinal patterns is altered by surrogates.
This is clearly beyond the scope of this paper and will be discussed elsewhere as a separate study; here it is sufficient for us to provide an empirical evidence for this. 
\end{remark}

\subsection{CEofOP statistic for a sequence of ordinal patterns forming a Markov chain}\label{subsec_Markov}

In this subsection we show that there is a connection between the $\CEofOP$ statistic and the classical likelihood ratio statistic.
Though taking place only in a particular case, this connection reveals the nature of the $\CEofOP$ statistic.

First we set up necessary notations.
Consider a sequence $\pi^{d,L}$ of ordinal patterns for that transition probabilities of ordinal patterns may change at some $t \in \{d,d+1\ldots,L\}$.
The basic statistic for testing whether there is a change in the transition probabilities is the likelihood ratio statistic \cite[Subsection~2.2.3]{BassevilleNikiforov1993}:
\begin{equation}\label{likelihoodRatioGeneraleq}
\mathrm{LR}\big(t;\pi^{d,L}\big)= -2\ln\Likelihood\big(H_0 \mid \pi^{d,L}\big) + 2\ln\Likelihood\big(H_A\mid \pi^{d,L}\big),
\end{equation}
where $\Likelihood\big(H \mid \pi^{d,L}\big)$ is the likelihood of the hypothesis $H$ given a sequence $\pi^{d,L}$ of ordinal patterns, and the hypotheses are given by
\begin{align*}
H_0&: \big(p_{j|i}(t)\big)_{i,j \in S_d}   =  \big(q_{j|i}(t)\big)_{i,j \in S_d},\\
H_A&: \big(p_{j|i}(t)\big)_{i,j \in S_d} \neq \big(q_{j|i}(t)\big)_{i,j \in S_d},
\end{align*}
where $p_{j|i}(t)$, $q_{j|i}(t)$ are transition probabilities of ordinal patterns before and after $t$, respectively.

\begin{proposition}\label{MarkovLikelihood}
If an abstract sequence $\Pi^d$ of ordinal patterns of order $d\in \mathbb{N}$ forms a Markov chain with at most one change-point,
then for a sequence $\pi^{d,L} = \big(\pi(k)\big)_{k=d}^L$ of ordinal patterns being a realization of $\Pi^d$  of length $L\in \mathbb{N}$, it holds
\begin{equation*}
\mathrm{LR}\big(t; \pi^{d,L}\big) = 2\,\CEofOP\big(t; \pi^{d,L}\big) + 2d \cdot \eCE\big( \pi^{d,L} \big).
\end{equation*}
\end{proposition}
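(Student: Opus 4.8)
The idea is to evaluate both maximized likelihoods in \eqref{likelihoodRatioGeneraleq} in closed form and match the outcome against the straightforward form \eqref{CEofOPstraight} of $\CEofOP$. Since $\Pi^d$ is a Markov chain, the likelihood of the realization $\pi^{d,L}$ factorizes as $\mathbb{P}(\Pi(d)=\pi(d))\prod_{l=d}^{L-1} p_{\pi(l+1)\mid\pi(l)}$, and I would read ``the likelihood of a hypothesis $H$'' as this likelihood maximized over the parameters that $H$ leaves free, i.e.\ the generalized likelihood ratio. Under $H_0$ all $L-d$ transitions $l=d,d+1,\ldots,L-1$ share one transition matrix; grouping the factors by the pair $(i,j)\in S_d\times S_d$ turns the log-likelihood into $\ln\mathbb{P}(\Pi(d)=\pi(d))+\sum_{i,j\in S_d} n_{i,j}(L)\ln p_{j\mid i}$, and the standard multinomial maximization (respecting $\sum_{j}n_{i,j}(L)=n_i(L)$ and the convention $0\ln 0=0$) gives $\widehat{p}_{j\mid i}=n_{i,j}(L)/n_i(L)$, while the initial-state probability is maximized at $1$. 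Hence $\ln\Likelihood(H_0\mid\pi^{d,L})=\sum_{i,j\in S_d} n_{i,j}(L)\ln\frac{n_{i,j}(L)}{n_i(L)}$.

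Next I would carry out the same computation under $H_A$. There the transitions with $l\in\{d,\ldots,t-1\}$, exactly those counted by $n_{i,j}(t)$, are governed by the ``before'' matrix, the transitions with $l\in\{t+d,\ldots,L-1\}$, those counted by $m_{i,j}(t)=n_{i,j}(L)-n_{i,j}(t+d)$, are governed by the ``after'' matrix, and the $d$ remaining transitions $l\in\{t,t+1,\ldots,t+d-1\}$ involve the transitional patterns $\pi(t+1),\ldots,\pi(t+d-1)$ that straddle the change-point; under $H_A$ these are assigned their own unconstrained probabilities, so --- together with the two initial-state probabilities --- they are maximized at $1$ and drop out. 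Maximizing the two remaining blocks as before yields $\widehat{p}_{j\mid i}=n_{i,j}(t)/n_i(t)$ and $\widehat{q}_{j\mid i}=m_{i,j}(t)/m_i(t)$, whence $\ln\Likelihood(H_A\mid\pi^{d,L})=\sum_{i,j\in S_d} n_{i,j}(t)\ln\frac{n_{i,j}(t)}{n_i(t)}+\sum_{i,j\in S_d} m_{i,j}(t)\ln\frac{m_{i,j}(t)}{m_i(t)}$.

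Substituting both expressions into \eqref{likelihoodRatioGeneraleq} and subtracting $2\,\CEofOP(t;\pi^{d,L})$ written in the form \eqref{CEofOPstraight}, the two double sums over $n_{i,j}(t)$ and over $m_{i,j}(t)$ cancel and one is left with $\bigl(-2+\tfrac{2(L-2d)}{L-d}\bigr)\sum_{i,j\in S_d} n_{i,j}(L)\ln\frac{n_{i,j}(L)}{n_i(L)}=-\tfrac{2d}{L-d}\sum_{i,j\in S_d} n_{i,j}(L)\ln\frac{n_{i,j}(L)}{n_i(L)}$, which by the definition \eqref{empiricalCEeq} of $\eCE$ equals $2d\cdot\eCE(\pi^{d,L})$; this is exactly the asserted identity. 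The one point that needs care is the treatment of the $d$ transitions crossing the change-point: the clean correction term $2d\cdot\eCE(\pi^{d,L})$ appears precisely because under $H_A$ these transitions carry free parameters and hence contribute nothing to the maximized likelihood, rather than being lumped into the ``before'' or ``after'' block. Tracking the $0\ln 0$ and $0/0$ conventions when a pattern or a pair fails to occur is routine but should be stated.
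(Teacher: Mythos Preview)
Your proposal is correct and follows essentially the same route as the paper: write out the maximized Markov log-likelihoods under $H_0$ and $H_A$, reduce them to the count-ratio sums $\sum_{i,j} n_{i,j}\ln(n_{i,j}/n_i)$, and compare with the explicit form \eqref{CEofOPstraight} of $\CEofOP$. The only cosmetic difference is how the nuisance terms are eliminated---the paper fixes $\pi(d)$ so that $\widehat{p}_{\pi(d)}(L)=\widehat{p}_{\pi(d)}(t)$ cancel and simply omits the $d$ crossing transitions from the $H_A$ likelihood, whereas you treat all of these as free parameters maximized at~$1$---but the remaining algebra is identical.
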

\begin{proof}
First we estimate the probabilities and the transition probabilities before ($p$) and after ($q$) the change \cite[Section~2]{AndersonGoodman1957}:
\begin{align*}
\widehat{p}_{i}(t)   &= \frac{n_{i}  (t)}{t - d  },\,\,\,\,\,\,\,\,\,\,\,\,\,\,\,\,\,\,\,\,\,\,\,\,   \widehat{p}_{j|i}(t) = \frac{n_{i,j}(t)}{n_{i}(t)},\\
\widehat{q}_{i}(t)   &= \frac{m_{i}  (t)}{L - (t + d)},\,\,\,\,\,\,\,\,    		            \widehat{q}_{j|i}(t) = \frac{m_{i,j}(t)}{m_{i}(t)}.
\end{align*}
Then, as one can see from \cite[Section~3.2]{AndersonGoodman1957}, we have
\begin{align*}
\Likelihood\big(H_0 \mid \pi^{d,L}\big)  &= \widehat{p}_{\pi(d)}(L) \prod_{l = d}    ^ {L-1} \widehat{p}_{\pi(l+1)|\pi(l)}(L)\\
&= \widehat{p}_{\pi(d)}(L) \prod_{i \in S_d}\prod_{j \in S_d} \big(\widehat{p}_{j|i}(L)\big)^{n_{i,j}(L)},\\
\Likelihood\big(H_A\mid \pi^{d,L}\big)  &= \widehat{p}_{\pi(d)}(t) \prod_{l = d}    ^ {t} \widehat{p}_{\pi(l+1)|\pi(l)}(t)
\prod_{l = t + d}^ {L-1} \widehat{q}_{\pi(l+1)|\pi(l)}(t)\\
&= \widehat{p}_{\pi(d)}(t) \prod_{i \in S_d}\prod_{j \in S_d} \big(\widehat{p}_{j|i}(t)\big)^{n_{i,j}(t)}
\prod_{i \in S_d}\prod_{j \in S_d} \big(\widehat{q}_{j|i}(t)\big)^{m_{i,j}(t)}.
\end{align*}

Assume that the first ordinal pattern $\pi(d)$ is fixed in order to simplify the computations.
Then $\widehat{p}_{\pi(d)}(L) = \widehat{p}_{\pi(d)}(t)$ and it holds:
\begin{align*}
\mathrm{LR}\big(t; \pi^{d,L}\big) = -&2\sum_{i \in S_d}\sum_{j \in S_d} n_{i,j}(L) \big(\ln n_{i,j}(L) - \ln n_{i}(L)\big)\\
+&2\sum_{i \in S_d}\sum_{j \in S_d} n_{i,j}(t) \big(\ln n_{i,j}(t) - \ln n_{i}(t)\big)\\
+&2\sum_{i \in S_d}\sum_{j \in S_d} m_{i,j}(t) \big(\ln m_{i,j}(t) - \ln m_{i}(t)\big).
\end{align*}		
Since $\sum\limits_{j \in S_d} n_{i,j}(t) = n_{i}(t)$, one finally obtains:
\begin{align*}
\mathrm{LR}\big(t; \pi^{d,L}\big) =&  2 (L-d) 	      	\, \eCE\Big(\big(\pi(k)\big)_{k=d}^L\Big)\\
&- 2 (t-d)   	        \, \eCE\Big(\big(\pi(k)\big)_{k=d}^t\Big)\\ 	
&- 2\big(L-(t + d) \big)\, \eCE\Big(\big(\pi(k)\big)_{k=t+d}^L\Big)\\
=&  2\,\CEofOP\big(t; \pi^{d,L}\big) + 2d \cdot \eCE\Big(\pi^{d,L} \Big).\qedhere
\end{align*}
\end{proof}

\subsection{Change-point detection via the CEofOP statistic}\label{subsecCPdetectionMethod}

Consider a sequence $\pi^{d,L}$ of ordinal patterns of order $d \in \mathbb{N}$ with length $L \in \mathbb{N}$, corresponding to a realization of some piecewise stationary stochastic process.
To detect a single change-point via the $\CEofOP$ statistic we first estimate its possible position by
\begin{equation*}
\widehat{t}^\ast = \argmax_{t = T_{\text{min}} + d, \ldots, L- T_{\text{min}}} S(t; \pi^{d,L}),
\end{equation*}
where $T_{\text{min}}$ is a minimal length of a sequence of ordinal patterns that is sufficient for a reliable estimation of empirical conditional entropy.
\begin{remark}
From the representation \eqref{CEofOPstat} it follows that for a reasonable computation of the $\CEofOP$ statistic, a reliable estimation of $\eCE$ before and after the assumed change-point is required.
To satisfy this requirement the stationary parts of a process are assumed to be sufficiently long.
We take $T_{\text{min}} = (d+1)!(d+1)$, which is equal to the number of all possible pairs of ordinal patterns of order $d$ (see \cite{KellerUnakafovUnakafova2014} for details).
Consequently, the length $L$ of a time series should satisfy
\begin{equation}\label{LengthLowBoundary}
L > 2T_{\text{min}} = 2(d+1)!(d+1).
\end{equation}
Note that this does not impose serious limitations on the suggested method, since
condition \eqref{LengthLowBoundary} is not too restrictive for $d \leq 3$.
However it implies using of either $d=2$ or $d=3$, since $d=1$ does not provide effective change-point detection (see Examples~\ref{CEofOPARprocess} and~\ref{ExampleARwithChanges}), while $d>3$ in most applications demands too large sample sizes.
\end{remark}
Then in order to check whether $\widehat{t}^\ast$ is an actual change-point we test between the hypotheses:
\begin{description}
\item[$H_0$:] parts $\pi(d), \pi(d+1), \ldots, \pi(\widehat{t}^\ast)$ and $\pi(\widehat{t}^\ast+d), \ldots, \pi(L)$ of the sequence $\pi^{d,L}$ come from the same distribution;
\item[$H_A$:] parts $\pi(d), \pi(d+1), \ldots, \pi(\widehat{t}^\ast)$ and $\pi(\widehat{t}^\ast+d), \ldots, \pi(L)$ of the sequence $\pi^{d,L}$ come from different distributions.
\end{description}
This test is performed by comparing $\CEofOP\mleft(\widehat{t}^\ast; \pi^{d,L}\mright)$ to a threshold $h$,
such that if the value of $\CEofOP$ is above the threshold then one rejects $H_0$ in favour of $H_A$, otherwise $H_0$ is accepted.
The choice of the threshold is ambiguous:
the higher $h$, the higher the possibility of false acceptance of the hypothesis $H_0$ is;
on the contrary, the lower $h$, the higher the possibility of false rejection of $H_0$ in favour of
$H_A$ ({\it false alarm}) is.

As it is usually done, we consider the threshold $h$ as a function of the desired probability $\alpha$ of false alarm;
for computing the threshold $h(\alpha)$ we use block bootstrapping from the sequence $\pi^{d,L}$ of ordinal patterns
(bootstrapping is often used in change-point detection for computing a threshold, see \cite{DavisonHinkley1997, Lahiri2003} for a theoretical discussion
and \cite{Polansky2007, KimMarzbanPercivalStuetzle2009} for applications of bootstrapping with detailed and clear explanations).
Algorithm~\ref{Problems2flowchart} describes the detection of at most one change-point via the $\CEofOP$ statistic.

\begin{algorithm}[!th]
\caption{Detecting at most one change-point}\label{Problems2flowchart}
\begin{algorithmic}[1]
\Require{sequence $\pi = \big(\pi(k)\big)_{k=t_\text{start}}^{t_\text{end}}$ of ordinal patterns of order $d$,
nominal probability $\alpha$ of false alarm}
\Ensure{estimate of a change-point $\widehat{t}^\ast$ if change-point is detected, otherwise return $0$.}
\Function{DetectSingleCP}{$\pi$, $\alpha$}
\Let{$T_{\text{min}}$}{$(d+1)!(d+1)$};
\If{$t_\text{end} - t_\text{start} < 2T_{\text{min}}$}
\State \Return{0};  \Comment{sequence is too short, no change-point can be detected}
\EndIf
\Let{$\widehat{t}^\ast$}{$\argmax\limits_{t = t_\text{start} + T_{\text{min}}, \ldots, t_\text{end} - T_{\text{min}}} \CEofOP(t; \pi)$};

\Let{$N_{\text{boot}}$}{$\lfloor \frac{5}{\alpha} \rfloor$}; \Comment{number of bootstrap samples for computing threshold}	
\For{$l = 1,2, \ldots, N_{\text{boot}}$}	 \Comment{computing threshold by bootstrapping}					
	
\Let{$\xi$}{randomly shuffled blocks of length $(d+1)$ from $\pi$};
\Let{$c_j$}          {$\argmax\limits_{t = t_\text{start} + T_{\text{min}}, \ldots, t_\text{end} - T_{\text{min}}} \CEofOP(t; \xi)$};
\EndFor
\Let{$c_j$}{Sort($c_j$);}			\Comment{sort the maximal values of $\CEofOP$ for bootstrap samples in decreasing order}		
\Let{$h$}{$c_{\lfloor\alpha N_{\text{boot}}\rfloor}$}

\If{$S(\widehat{t}^\ast; \pi) < h$}
\State \Return{0};
\Else
\State \Return{$\widehat{t}^\ast$};
\EndIf
\EndFunction
\end{algorithmic}
\end{algorithm}

To detect multiple change-points we use an algorithm that consists of two steps:
\begin{description}
\item [Step 1:] preliminary estimation of boundaries of the stationary segments with a threshold $h(2\alpha)$ computed for doubled nominal probability of false alarm
(that is with a higher risk of detecting false change-points).
\item [Step 2:] verification of the boundaries and exclusion of false change-points: a change-point is searched for a merging of every two adjacent intervals.
\end{description}
Details of these two steps are displayed in Algorithm~\ref{Problems3flowchart}.
While Step~1 is the usual binary segmentation procedure as suggested in \cite{Vostrikova1981}, Step~2 improves the obtained solution following the idea suggested in \cite{BrodskyDarkhovskyKaplanShishkin1999}.

\begin{algorithm}[!th]
\caption{Detecting multiple change-points}\label{Problems3flowchart}
\begin{algorithmic}[1]
\Require{sequence $\pi = \big(\pi(k)\big)_{k=d}^L$ of ordinal patterns of order $d$, nominal probability $\alpha$ of false alarm.}
\Ensure{estimates of the number $\widehat{N}_\text{st}$ of stationary segments and of their boundaries $\big(\widehat{t}^\ast_k\big)_{k=0}^{\widehat{N}_\text{st}}$.}
\Statex
\Function{DetectAllCP}{$\pi$, $\alpha$}
\State $\widehat{N}_\text{st} \gets 1$; $\widehat{t}^\ast_0 \gets 0$; $\widehat{t}^\ast_1 \gets L$; $k \gets 0$  \Comment{Step 1} 	
\Repeat 		
\Let{$\widehat{t}^\ast$}{\LCall{DetectSingleCP}{$\big(\pi(k)\big)_{k=\widehat{t}^\ast_k + d}^{\widehat{t}^\ast_{k+1}}$, $2\alpha$}};
\If{$\widehat{t}^\ast > 0$}
\State   {\bf Insert }$\widehat{t}^\ast$ to the list of change-points after $\widehat{t}^\ast_i$;
\Let{$\widehat{N}_\text{st}$}{$\widehat{N}_\text{st}+1$};
\Else
\Let{$k$}{$k+1$};
\EndIf
\Until{$k < \widehat{N}_\text{st}$;}
\Let{$k$}{$0$};   	\Comment{Step 2}
\Repeat
\Let{$\widehat{t}^\ast$}{\LCall{DetectSingleCP}{$\big(\pi(k)\big)_{k=\widehat{t}^\ast_k + d}^{\widehat{t}^\ast_{k+2}}$,
$\alpha$}};
\If{$\widehat{t}^\ast > 0$}
\Let{$\widehat{t}^\ast_{k+1}$}{$\widehat{t}^\ast$};
\Let{$k$}{$k+1$};
\Else
\State   {\bf Delete }$\widehat{t}^\ast_{k+1}$ from the change-points list;
\Let{$\widehat{N}_\text{st}$}{$\widehat{N}_\text{st}-1$};
\EndIf
\Until{$k < \widehat{N}_\text{st} - 1$;}
\State \Return{$\widehat{N}_\text{st}, \big(\widehat{t}^\ast_k\big)_{k=0}^{\widehat{N}_\text{st}}$};
\EndFunction
\end{algorithmic}
\end{algorithm}

\section{Experiments}\label{SecExperiments}

In this section we empirically investigate performance of the method for change-point detection via the $\CEofOP$ statistic.
We apply it to the noisy logistic processes and to autoregressive processes (see Subsection~\ref{subsec_CPinOPdetect}) and
compare performances of change-point detection by the suggested method and by the following methods.
\begin{itemize}
\item The ordinal-patterns-based method for detecting change-points via the {\it CMMD statistic} \cite{SinnGhodsiKeller2012, SinnKellerChen2013}:
A time series is split into {\it windows} of equal lengths $W \in \mathbb{N}$, empirical probabilities of ordinal patterns are estimated in every window.
If there is a ordinal change-point in the time series,
then the empirical probabilities of ordinal patterns should be approximately constant before the change-point and after the change-point,
but they change at the window with the change-point.
To detect this change authors have introduced the CMMD statistic\footnote{Note that the definition of the CMMD statistic in \cite{SinnGhodsiKeller2012} contains a mistake,
which is corrected in \cite{SinnKellerChen2013}.
The results of numerical experiments reported in \cite{SinnGhodsiKeller2012} also do not comply
with the actual definition of the CMMD statistic,
see \cite[Subsections~4.2.1.1,~4.5.1.1]{Unakafov2015} for details.}.
In the original papers \cite{SinnGhodsiKeller2012, SinnKellerChen2013} authors do not estimate change-points, but only the corresponding window numbers;
for the algorithm of change-point estimation by means of the CMMD statistic we refer to \cite[Subsection~4.5.1]{Unakafov2015}.									
\item Two versions of the {\it classical Brodsky-Darkhovsky method} \cite{BrodskyDarkhovskyKaplanShishkin1999}:
The Brodsky-Darkhovsky method can be used for detecting changes in various characteristics of a time series $x = \big(x(t)\big)_{t=1}^L$,
but the characteristic of interest should be selected in advance.
In this paper we consider detecting changes in mean which is just the basic characteristic,
and in correlation function $\operatorname{corr}(x(t), x(t+1))$
which reflects relations between the future and the past of a time series and seems to be a natural choice for detecting ordinal change-points.
Changes in mean are detected by the generalized version of the Kolmogorov-Smirnov statistic \cite{BrodskyDarkhovskyKaplanShishkin1999}:
\begin{equation*}
\mathrm{BD^{exp}}(t; x, \delta) = \mleft(\frac{t(L-t)}{L^2}\mright)^\delta \: \mleft|\frac{1}{t}\sum_{l=1}^t x(l) - \frac{1}{L-t}\sum_{l=t+1}^L x(l)\mright|,
\end{equation*}
where the parameter $\delta \in [0, 1]$ regulates properties of the statistic, $\delta = 0$ is basically used (see \cite{BrodskyDarkhovskyKaplanShishkin1999} for details).
Changes in the correlation function are detected by the following statistic:
\begin{equation*}
\mathrm{BD^{corr}}(t; x, \delta) = \mathrm{BD^{exp}}\big(t; \big(y(t)\big)_{t=1}^{L-1}, \delta\big) \text{ with } y(t) = x(t)x(t+1).
\end{equation*}
\end{itemize}
\begin{remark}
Note that we consider the statistic $\mathrm{BD^{exp}}$, which is intended to detect changes in mean, though ordinal-patterns-based statistics do not detect these changes. This is motivated by the fact that changes in the noisy logistic processes are on the one hand changes in mean, and in the other hand -- ordinal changes in the sense of Definition~\ref{StructureChanges_def}. Therefore, they can be detected both by $\mathrm{BD^{exp}}$ and by ordinal-patterns-based statistics. In general, by the nature of ordinal time series analysis, changes in mean and in the ordinal structure are in some sense complementary. 
\end{remark}

We carry out experiments for order $d = 3$ of ordinal patterns
(lower orders may provide worse results because of reduced sensitivity, while higher orders are applicable only to rather long time series due to condition \eqref{LengthLowBoundary}).
For the CMMD statistic; we take the window size $W = 256$.
(There are no special reasons for this choice except the fact that $W = 256$ is sufficient for estimating probabilities of ordinal patterns of order $d = 3$
inside the windows, since $256 > 120  = 5(d+1)!$ \cite[Section 9.3]{Amigo2010}.
Results of the experiments remain almost the same for $200\leq W\leq1000$.)

In Subsection~\ref{sec4_ExpEstimation} we study how well the statistics for change-point detection estimate the position of a single change-point.
Since we expect that the performance of the statistics for change-point detection may strongly depend on the length of realization,
we check this in Subsection~\ref{sec4_ExpEstimationSample}.
Finally, we investigate the performance of various statistics for detecting multiple change-points in Subsection~\ref{sec4_ExpMultipleCP}.

\subsection{Estimation of the position of a single change-point }\label{sec4_ExpEstimation}

Consider $N = 10000$ realizations $x^j = \big(x^j(t)\big)_{t=0}^L$ with $j = 1, \ldots, N$ for each of the processes listed in Table~\ref{CPtimeSeries1_tbl}.
A single change occurs at a random time $t^\ast$ uniformly distributed in $\mleft\{\frac{L}{4} - W, \frac{L}{4} - W + 1, \ldots, \frac{L}{4} + W\mright\}$.
For all processes, length $L = 80\,W$ of sequences of ordinal patterns is taken.

\begin{table}[!th]
\centering
\begin{tabular}{ l | l }
\toprule
Short name & Complete designation\\
\midrule
NL, $3.95\rightarrow3.98$, $\sigma=0.2$& $\mathrm{NL}\big((3.95, 3.98), (0.2, 0.2), t^\ast\big)$\\
NL, $3.95\rightarrow3.80$, $\sigma=0.3$& $\mathrm{NL}\big((3.95, 3.80), (0.3, 0.3), t^\ast\big)$\\
NL, $3.95\rightarrow4.00$, $\sigma=0.2$& $\mathrm{NL}\big((3.95, 4.00), (0.2, 0.2), t^\ast\big)$\\
\midrule
AR, $0.1\rightarrow 0.3$ 				& $\mathrm{AR}\big((0.1, 0.3), t^\ast\big)$\\
AR, $0.1\rightarrow 0.4$ 				& $\mathrm{AR}\big((0.1, 0.4), t^\ast\big)$\\
AR, $0.1\rightarrow 0.5$				& $\mathrm{AR}\big((0.1, 0.5), t^\ast\big)$\\
\bottomrule
\end{tabular}\vspace{2mm}
\caption{Processes used for investigation of the change-point detection}
\label{CPtimeSeries1_tbl}
\end{table}

To measure the overall accuracy of change-point detection via some statistic $S$ as applied to the process $X$ we use three quantities.
Let us first determine the error of the change-point estimation provided by the statistic $S$ for the $j$-th realization of a process $X$:
\begin{equation*}
\mathrm{err}^j(S, X) = \Big( \widehat{t}^\ast(S; x^j) - t^\ast \Big),
\end{equation*}
where $t^\ast$ is the actual position of the change-point and $\widehat{t}^\ast(S; x^j)$ is its estimate obtained by using $S$.
Then the {\it fraction of satisfactorily estimated change-points} $\mathrm{sE}$ (averaged over $N$ realizations) is defined by:
\begin{equation*}
\mathrm{sE}(S, X) =  \frac{\#\big\{j \in \{1,2,\ldots, N\}:\, |\mathrm{err}^j(S, X)| \leq \mathrm{MaxErr} \big\}}{N},
\end{equation*}
where $\mathrm{MaxErr}$ is the maximal satisfactory error, we take $\mathrm{MaxErr} = W = 256$.
The {\it bias} and the {\it root mean squared error} are respectively given by
\begin{align*}
\mathrm{B}(S, X) &= \frac{1}{N}\sum\limits_{j=1}^{N} \mathrm{err}^j(S, X),\\
\RMSE(S, X) 	 &= \sqrt{ \frac{1}{N} \sum\limits_{j=1}^{N}\big(\mathrm{err}^j(S, X)\big)^2 }.
\end{align*}
The larger $\mathrm{sE}$ is and the more near to zero the bias and the $\RMSE$ are, the more accurate the estimation of a change-point is.

Results of the experiments are presented in Tables~\ref{CPproblem1_log_tbl},~\ref{CPproblem1_ar_tbl} for NL and AR processes, respectively.
For every process the best values of performance measures are shown in {\bf bold}.

\begin{table}[!th]
\centering
\begin{tabular}{ l | c c c | c c c | c c c }
\toprule
&\multicolumn{3}{c|}{NL, $3.95\rightarrow3.98$} & \multicolumn{3}{c|}{NL, $3.95\rightarrow3.80$}& \multicolumn{3}{c}{NL, $3.95\rightarrow4.00$}\\
Statistic 		  &\multicolumn{3}{c|}{$\sigma=0.2$}              & \multicolumn{3}{c|}{$\sigma=0.3$}             & \multicolumn{3}{c}{$\sigma=0.2$}\\
\cmidrule(r){2-10}
&$\mathrm{sE}$ &$\mathrm{B}   $&$\RMSE$         & $\mathrm{sE}$&$\mathrm{B}   $&$\RMSE$        & $\mathrm{sE}$&$\mathrm{B}   $&$\RMSE$\\
\midrule
$\mathrm{CMMD}$       &      0.34    &      698      &        1653    &    0.50      &       -51     &      306      &      0.68    &   {\bf-13}    &     206\\
$\mathrm{CEofOP}$     &      0.61    &  \,{\bf53}    &     \,\,397    &    0.65      &\,\,\,\,{\bf1} &      256      &      0.88    &    \,\,20     &      99\\
$\mathrm{BD^{exp}}$   &  {\bf0.62}   &   \,\,78      &\,\,{\bf 351}   &{\bf0.78}      &   \,\,-6     &  {\bf145}     & {\bf0.89}    &    \,\,43     &\,{\bf 96}\\
$\mathrm{BD^{corr}}$  &      0.44    &   \,\,85      &     \,\,656    &    0.71      &    \,\,13     &      202      &      0.77    &    \,\,43     &     189\\
\bottomrule
\end{tabular}\vspace{2mm}
\caption{Performance of different statistics for estimating change-point in NL processes}
\label{CPproblem1_log_tbl}
\end{table}

\begin{table}[!th]
\centering
\begin{tabular}{ l | c c c | c c c | c c c }
\toprule
Statistic	          &\multicolumn{3}{c|}{AR, $0.1\rightarrow0.3$}  & \multicolumn{3}{c|}{AR, $0.1\rightarrow 0.4$}& \multicolumn{3}{c}{AR, $0.1\rightarrow 0.5$}\\
\cmidrule(r){2-10}
&$\mathrm{sE}$&$\mathrm{B}   $&$\RMSE$         & $\mathrm{sE}$&$\mathrm{B}   $&$\RMSE$        &  $\mathrm{sE}$&$\mathrm{B}   $&$\RMSE$\\
\midrule
$\mathrm{CMMD}$       &      0.32   &      616      &      1626      &     0.54     &        -14    &      368      &      0.68     &        -48    &    184\\
$\mathrm{CEofOP}$     &      0.39   &      126      &      1838      &     0.68     &\,\,\,\,{\bf0} &      234      &      0.86     &\,\,\,\,{\bf0} &    110\\
$\mathrm{BD^{exp}}$   &      0.00   &\small{>$10^3$}&\small{>$10^4$}&     0.00      &\small{>$10^4$}&\small{>$10^4$}&      0.00     &\small{>$10^4$}&\small{>$10^4$}\\
$\mathrm{BD^{corr}}$  &  {\bf0.79}  & \,{\bf31}     &\,\,{\bf151}    &{\bf 0.92}    &     \,\,21    &  \,{\bf73}    & {\bf 0.97}    &     \,\,21    &\,{\bf50}\\
\bottomrule
\end{tabular}\vspace{2mm}
\caption{Performance of different statistics for estimating change-point in AR processes}
\label{CPproblem1_ar_tbl}
\end{table}

Let us summarize:
For the considered processes the CEofOP statistic estimates change-point more accurately than the CMMD statistic.
For the NL processes the CEofOP statistic has almost the same performance as the Brodsky-Darkhovsky method;
for the AR processes performance of the classical method is better, though CEofOP has lower bias.
In contrast to the ordinal-patterns-based methods,
the Brodsky-Darkhovsky method is unreliable when there is lack of a priori information about the time series.
For instance, changes in NL processes only slightly influence the correlation function and $\mathrm{BD^{corr}}$ does not provide a good indication of changes
(cf. performance of $\mathrm{BD^{corr}}$ and $\CEofOP$ in Table~\ref{CPproblem1_log_tbl}).
Meanwhile, changes in the AR processes do not influence the expected value (see Example~\ref{ExampleARwithChanges}),
which does not allow to detect them using $\mathrm{BD^{exp}}$ (see Table~\ref{CPproblem1_ar_tbl}).
Therefore we do not consider the $\mathrm{BD^{exp}}$ statistic in further experiments.

\subsection{Estimating position of a single change-point for different lengths of time series}\label{sec4_ExpEstimationSample}

Here we study how the accuracy of change-point estimation for the three considered statistics depends on the length $L$ of a time series.
We take $N = 50000$ realizations of NL, $3.95\rightarrow3.98$, $\sigma=0.2$ and AR, $0.1\rightarrow 0.4$ for realization lengths $L = 24\,W, 28\,W, \ldots, 120\,W$.
Again, we consider a single change at a random time $t^\ast \in \mleft\{\frac{L}{4} - W, \frac{L}{4} - W + 1, \ldots, \frac{L}{4} + W\mright\}$.
Results of the experiment are presented in Figure~\ref{CPproblem1sample_fig}.
\begin{figure}[!th]
\begin{minipage}[h]{0.49\hsize}
\centering
\includegraphics[scale=0.43]{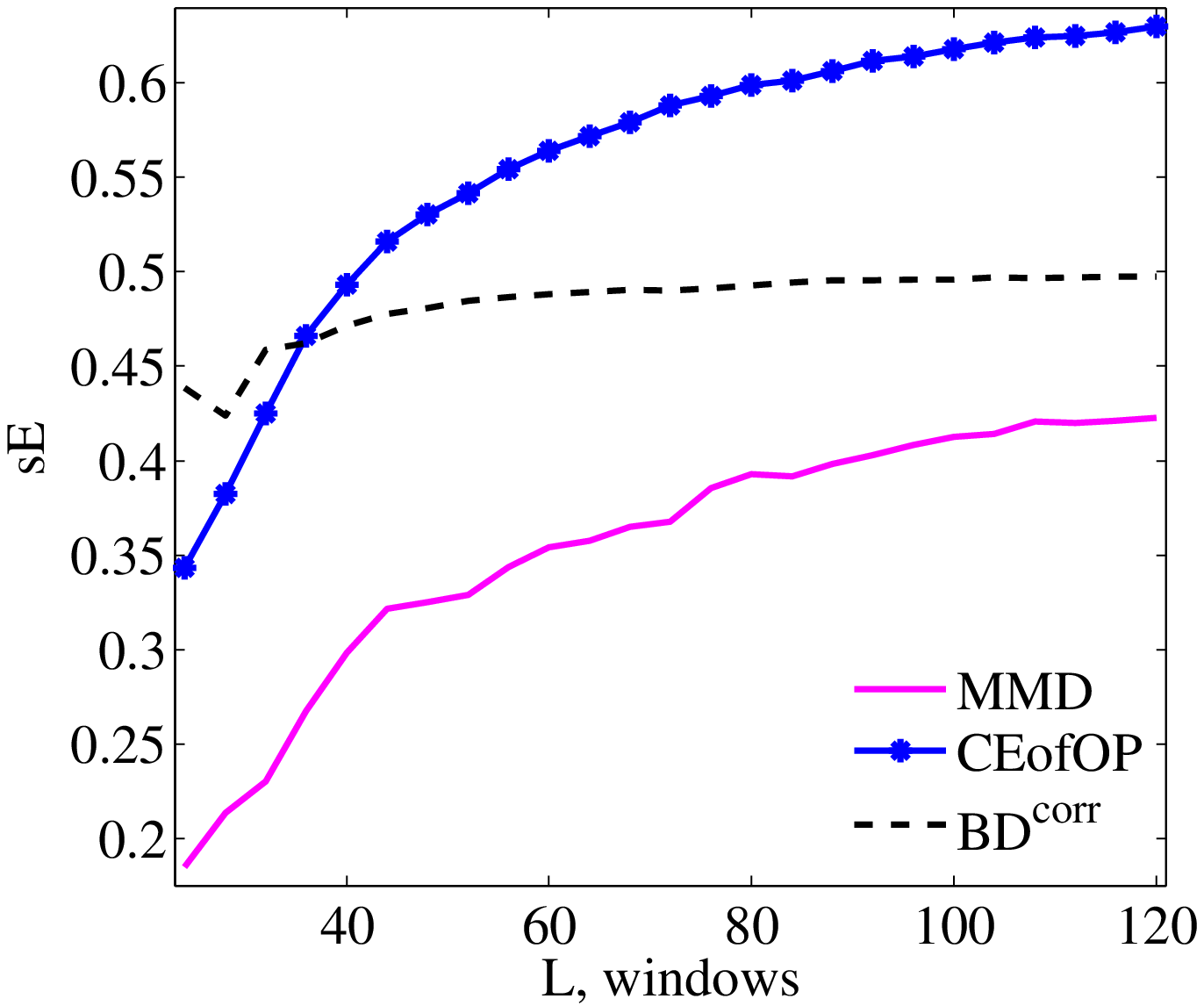}
	
\hspace{5mm}(a)

\includegraphics[scale=0.43]{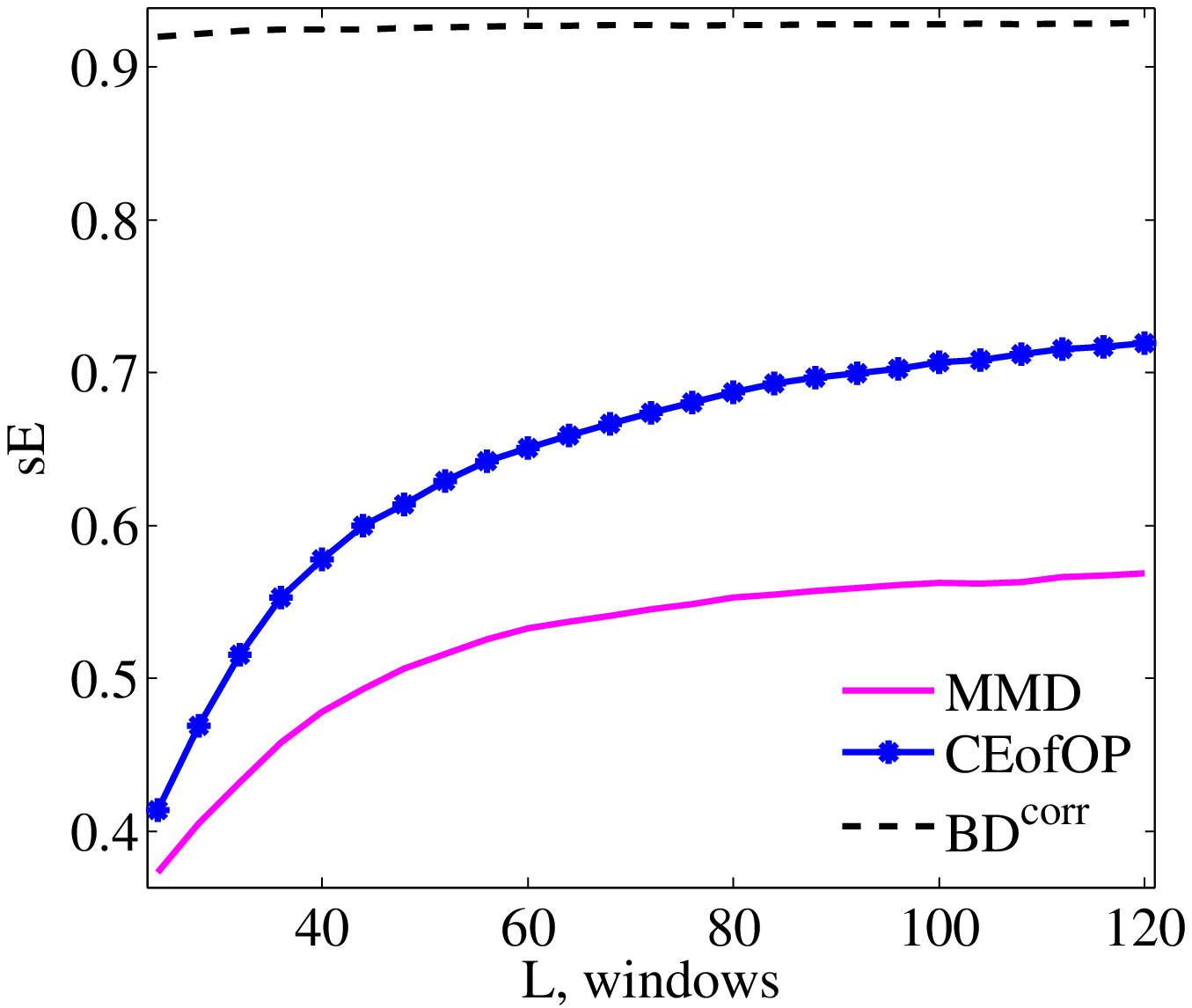}
	
\hspace{5mm}(c)
\end{minipage}
\begin{minipage}[h]{0.49\hsize}
\centering
\includegraphics[scale=0.43]{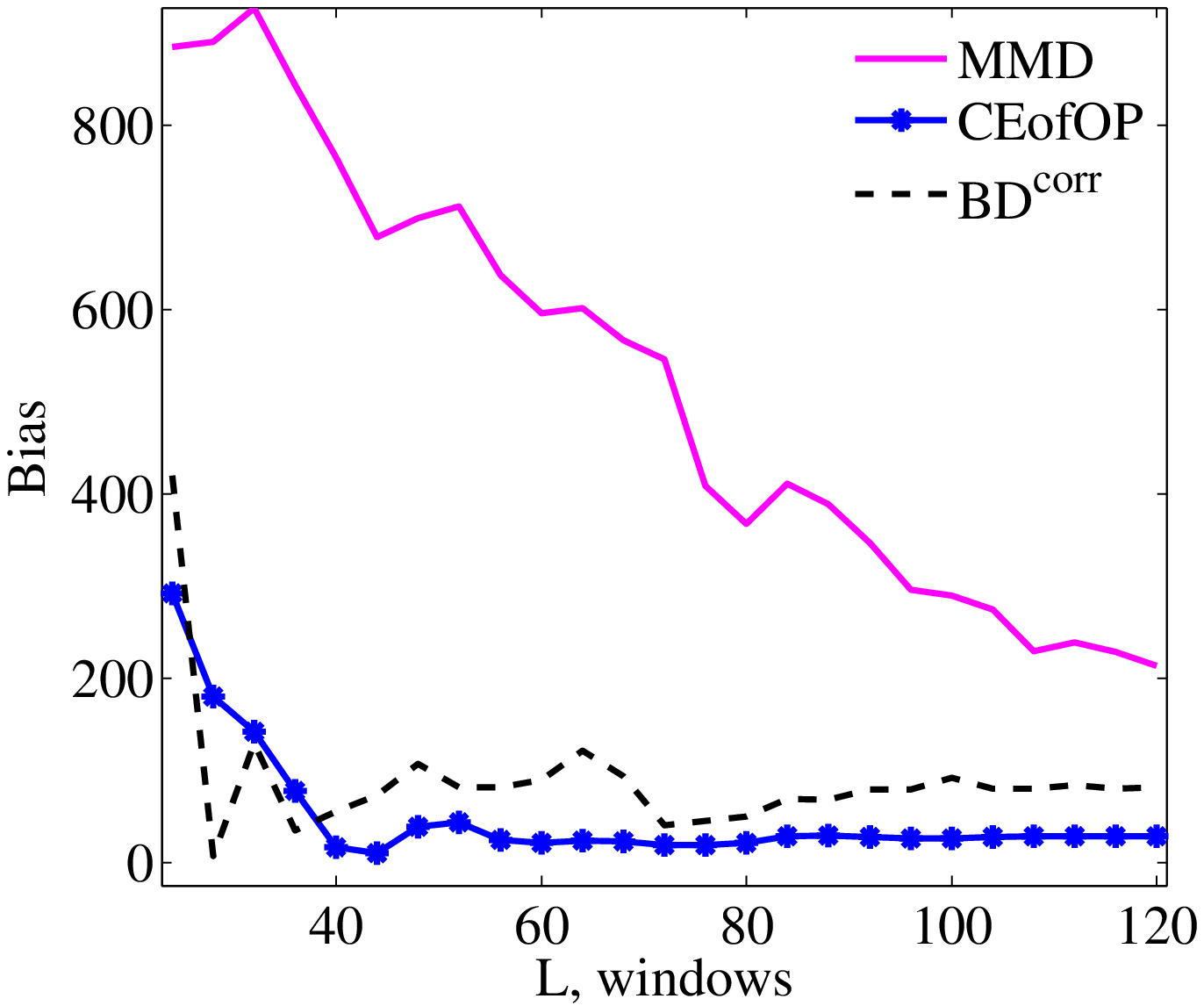}
	
\hspace{5mm}(b)

\includegraphics[scale=0.43]{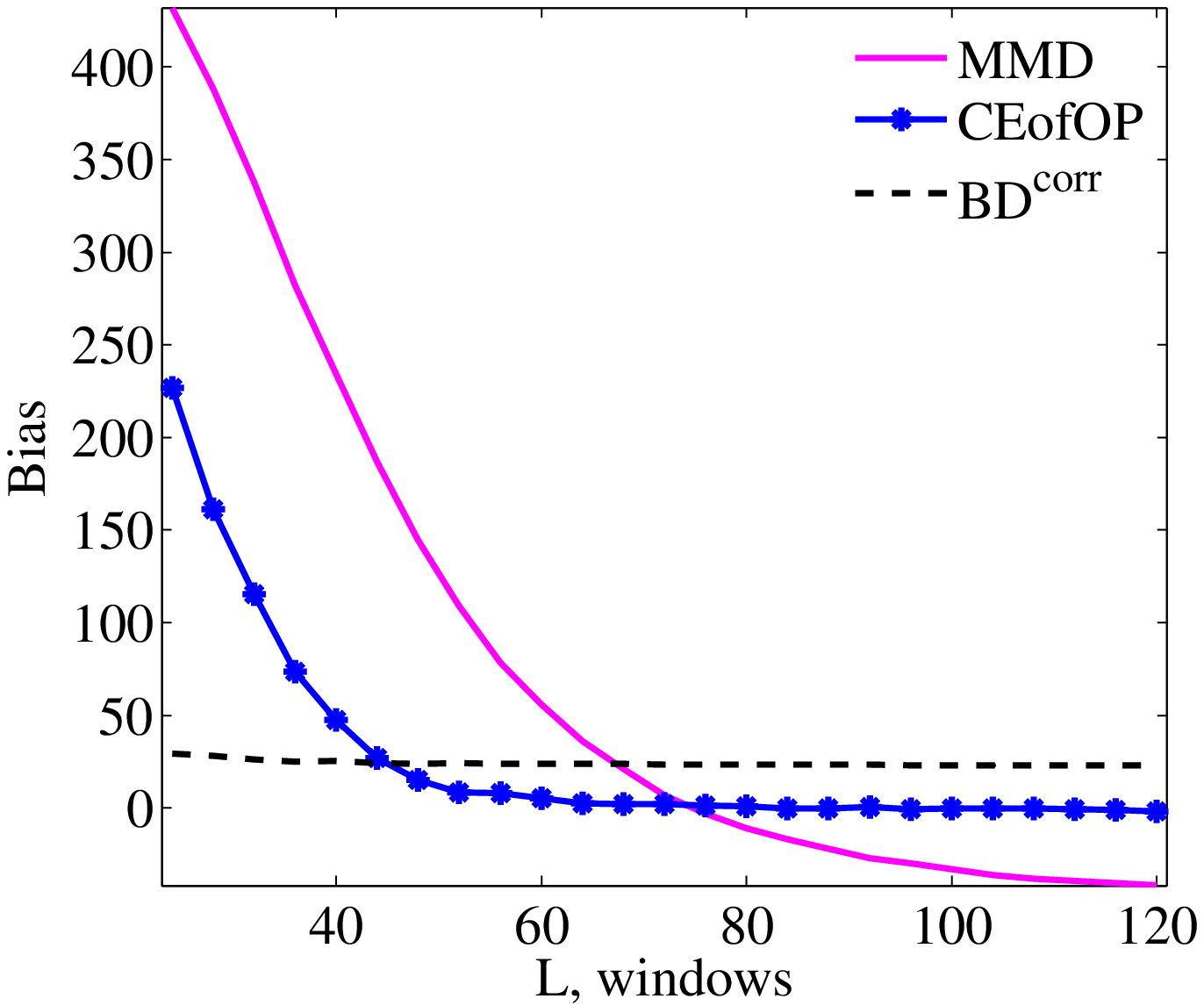}
	
\hspace{5mm}(d)
\end{minipage}
\caption{Measures of change-point detection performance for NL (a,~b) and AR (c,~d) processes with different lengths,
where $L$ is the product of window numbers given on the abscissa axis with window length $W=256$} 
\label{CPproblem1sample_fig}
\end{figure}

Summarizing,
performance of the CEofOP statistic strongly depends on the length of time series but is generally better than for the CMMD statistic.
In comparison with the classical Brodsky-Darkhovsky method, CEofOP has better performance for NL processes (see Figures~\ref{CPproblem1sample_fig}a,b),
and lower bias for AR processes (see Figure~\ref{CPproblem1sample_fig}d).

\subsection{Detecting multiple change-points}\label{sec4_ExpMultipleCP}

Here we investigate how well the considered statistics detect multiple change-points.
Methods for change-point detection via the CEofOP and the CMMD statistics are implemented according to Subsection~\ref{subsecCPdetectionMethod} and \cite[Subsection~4.5.1]{Unakafov2015}, respectively.
The Brodsky-Darkhovsky method is implemented according to \cite{BrodskyDarkhovskyKaplanShishkin1999} with only one exception:
to compute a threshold for it we use bootstrapping, which in our case provided better results than the technique described in \cite{BrodskyDarkhovskyKaplanShishkin1999}.
Nominal probability of a false alarm $\alpha = 0.05$ has been taken for all methods
(in the case of the CMMD statistic we have used the equivalent value $0.001$, see \cite[Subsection~4.3.2]{Unakafov2015}).

We consider here two processes, $\mathrm{AR}\big((0.3, 0.5, 0.1, 0.4), (t^\ast_1, t^\ast_2, t^\ast_3)\big)$ and \linebreak
$\mathrm{NL}\big((3.98, 4, 3.95, 3.8), (0.2, 0.2, 0.2, 0.3), (t^\ast_1, t^\ast_2, t^\ast_3)\big)$,
with change-points $t^\ast_k$ being independent and uniformly distributed in $\mleft\{\overline{t_k^\ast} - W, \overline{t_k^\ast} - W + 1, \ldots, \overline{t_k^\ast} + W\mright\}$
for $k = 1,2,3$ with $\overline{t_1^\ast} = 0.3L$, $\overline{t_2^\ast} = 0.7L$, $\overline{t_3^\ast} = 0.9L$, and $L = 100 \,W$.
For both processes we generate $N = 10000$ realizations $x^j$ with $j = 1, \ldots, N$.
We consider unequal lengths of stationary segments to study methods for change-point detection in more realistic conditions.

As we apply change-point detection via a statistic $S$ to realization $x^j$,
we obtain estimates of the number $\widehat{N}_\text{st}(S; x^j)$ of stationary segments
and of change-points positions $\widehat{t}_{l}^\ast(S; x^j)$ for $l = 1,2,\ldots,\widehat{N}_\text{st}(S; x^j)-1$.
Since the number of estimated change-points may be different from the actual number of changes,
we suppose that the estimate for $t_k^\ast$ is provided by the nearest $\widehat{t}_{l}^\ast(S; x^j)$.
Therefore the error of estimation of the $k$-th change-point provided by $S$ is given by
\begin{equation*}
	\mathrm{err}^j_k(S, X) = \min_{l = 1,2,\ldots,\widehat{N}_\text{st}(S; x^j)-1}  \Big| \widehat{t}_{l}^\ast(S; x^j) - t_k^\ast \Big|.
\end{equation*}
To assess the overall accuracy of change-point detection, we compute two quantities.
The fraction $\mathrm{sE}_k$ of satisfactory estimates of a change-point $t^\ast_k$, $k = 1,2,3$ is given by
\begin{equation*}
\mathrm{sE}_k(S, X) = \frac{\#\big\{j \in \{1,2,\ldots,N\} \mid \mathrm{err}^j_k(S, X) \leq \mathrm{MaxErr} \big\}}{N},
\end{equation*}
where $\mathrm{MaxErr}$ is the maximal satisfactory error; we take $\mathrm{MaxErr} = W = 256$.
The average number of {\it false change-points} is defined by:
\begin{equation*}
\mathrm{fCP}(S, X) = \frac{\sum\limits_{j=1}^{N}\!\big( \widehat{N}_\text{st}(S; x^j)-1-\#\big\{k \in \{1,2,3\} \mid \mathrm{err}^j_k(S, X) \leq \mathrm{MaxErr} \big\} \big)}{N}.
\end{equation*}

Results of the experiment are presented in Tables~\ref{CPproblem3_NL_tbl} and~\ref{CPproblem3_AR_tbl}, the best values are shown in {\bf bold}.

\begin{table}[!th]
\centering
\begin{tabular}{ l | c | c | c c c c }
\toprule
Statistic             &fCP       &\multicolumn{4}{c}{Fraction $\mathrm{sE}_k$ of satisfactory estimates} \\
&          &1st change   &2nd change   &3rd change   &average   \\
\midrule
$\mathrm{cMMD}$\,     &     1.17 &     0.465   &     0.642   &     0.747   &     0.618   \\
$\mathrm{CEofOP}$\,   &{\bf 0.62}&{\bf 0.753}  &{\bf 0.882}  &{\bf 0.930}  &{\bf 0.855}  \\
$\mathrm{BD^{corr}}$\,&     1.34 &     0.296   &     0.737   &     0.751   &     0.595   \\
\bottomrule
\end{tabular}\vspace{2mm}
\caption{Performance of change-point detection methods for the process with three change-points $\mathrm{NL}\big((3.98, 4, 3.95, 3.8), (0.2, 0.2, 0.2, 0.3), (t^\ast_1, t^\ast_2, t^\ast_3)\big)$}
\label{CPproblem3_NL_tbl}
\end{table}

\begin{table}[!th]
\centering
\begin{tabular}{ l | c | c | c c c c }
\toprule
Statistic             &fCP       &\multicolumn{4}{c}{Fraction $\mathrm{sE}_k$ of satisfactory estimates} \\
&          &1st change   &2nd change   &3rd change   &average   \\
\midrule
$\mathrm{CMMD}$\,     &     1.17 &     0.340   &     0.640   &     0.334   &     0.438\\
$\mathrm{CEofOP}$\,   &     1.12 &     0.368   &     0.834   &     0.517   &     0.573\\
$\mathrm{BD^{corr}}$\,&{\bf 0.53}&{\bf 0.783}  &{\bf 0.970}  &{\bf 0.931}  &{\bf 0.895}\\
\bottomrule
\end{tabular}\vspace{2mm}
\caption{Performance of change-point detection methods for the process with three change-points $\mathrm{AR}\big((0.3, 0.5, 0.1, 0.4), (t^\ast_1, t^\ast_2, t^\ast_3)\big)$}
\label{CPproblem3_AR_tbl}
\end{table}

In summary,
since distributions of ordinal patterns for NL and AR processes have different properties, results for them differ significantly.
The CEofOP statistic provides good results for the NL processes.
However, for the AR processes its performance is much worse: only the most prominent change is detected rather well.
Weak results for two other change-points are caused by the fact that the CEofOP statistic is rather sensitive to the lengths of stationary segments
(we have already seen this in Subsection~\ref{sec4_ExpEstimationSample}), and in this case they are not very long.

\section{Conclusions and open points}\label{SecConcl}

In this paper we have introduced a method for change-point detection via the CEofOP statistic and have tested it for time series coming from two classes of models having quite different distributions, namely piecewise stationary noisy logistic and autoregressive processes.

The empirical investigations suggest that the method proposed provides better detection of ordinal change-points
than the ordinal-patterns-based method introduced in \cite{SinnGhodsiKeller2012, SinnKellerChen2013}.
Performance of our method for the two model classes considered is particularly comparable to that for the classical Brodsky-Darkhovsky method,
but in contrast to it, ordinal-patterns-based methods require less a priori knowledge about the time series.
This can be especially useful in the case of considering non-linear models where the autocorrelation function does not describe distributions completely. Here the point is that with exception of the mean much of the distribution is captured by its ordinal structure. So (together with methods finding changes in mean) the CEofOP statistic can be used at least for a first exploration step.

Although numerical experiments and tests to real-world data cannot replace rigorous theoretical studies, the results of the current study show the potential of the change-point detection via the CEofOP statistic. 
However, there are some open points listed below.
\begin{enumerate}
\item A method for computing a threshold $h$ for the $\CEofOP$ statistic without using bootstrapping is of interest, since the bootstrapping procedure is rather time consuming.
One possible solution is to utilize Theorem~\ref{CEofOP_change} (Section~\ref{sec_stationary}) and to precompute thresholds using the values of $\Delta^d_{\gamma,\theta}(P,Q)$.
However, this approach requires further investigation.
\item The binary segmentation procedure \cite{Vostrikova1981} is not the only possible method for detecting multiple change-points.
        In \cite{Lavielle1999, LavielleTeyssiere2007} an alternative approach is suggested:
        the number of stationary segments $\widehat{N}_\text{st}$ is estimated by optimizing a contrast function, then the positions of the change-points are adjusted.
        Likewise one can consider a method for multiple change-point detection based on maximizing the following 
generalization of $\CEofOP$ statistic:        
        \begin{multline*}
        \CEofOP(t) = (L - d \widehat{N}_\text{st}) \,\eCE\Big(\big(\pi(k)\big)_{k=d}^L\Big)\\
		- \sum_{l=1}^{\widehat{N}_\text{st}}\big(\widehat{t}^\ast_{l} - \widehat{t}^\ast_{l-1} - d\big) \,\eCE\Big(\pi\big(\widehat{t}^\ast_{l-1}+d\big), \ldots,\pi\big(\widehat{t}^\ast_{l}\big)\Big),
	\end{multline*}
        where $\widehat{N}_\text{st}\in \mathbb{N}$  is an estimate of number of stationary segments,
        $\widehat{t}^\ast_1=0$, $\widehat{t}^\ast_{\widehat{N}_\text{st}} = L$
        and $\widehat{t}^\ast_1, \widehat{t}^\ast_2, \ldots,\widehat{t}^\ast_{\widehat{N}_\text{st}-1} \in \mathbb{N}$ are estimates of change-points.
        Further investigation in this direction could be of interest.
  \item As we have seen in Subsection~\ref{sec4_ExpEstimationSample}, CEofOP statistic requires rather large sample sizes to provide reliable change-point detection. This is due to the necessity of the empirical conditional entropy estimation (see Subsection~\ref{subsecCPdetectionMethod}). In order to reduce the required sample size, one may consider more effective estimates of the conditional entropy, for instance, the Grassberger estimate (see \cite{Grassberger2003} and \cite[Subsection~3.4.1]{Unakafov2015}). However elaboration of this idea is beyond the scope of this paper.
  \item In this paper only one-dimensional time series are considered, though there is no principal limitation for applying ordinal-patterns-based methods to multivariate data (see \cite{Keller2012}). Discussion of using ordinal-patterns-based methods for detecting change-point in multivariate data (for instance, in multichannel EEG) is therefore of interest.
  \item We have considered here only the ``off-line'' detection of changes, which is used when the acquisition of a time series is completed.
        Meanwhile, in many applications it is necessary to detect change-points ``on-line'', based on a small number of observations after the change \cite{BassevilleNikiforov1993}.
        Development of on-line versions of ordinal-patterns-based methods for change-point detection may be an interesting direction of a future work.
\end{enumerate}

\section{Asymptotic behavior of the CEofOP statistic}\label{sec_stationary}

Here we consider the values of $\CEofOP$ for the case when segments of a stochastic process before and after the change-point have infinite length.

Let us first introduce some notation.
Given an ordinal-$d^+$-stationary stochastic process $X$ for $d\in\mathbb{N}$
the distribution of pairs of ordinal patterns is denoted by $P = (p_{i,j})_{i,j\in S_d}$,
with $p_{i,j} = \mathbb{P}\big(\Pi(t) = i,\Pi(t+1) = j\big) = p_{j|i} p_{i}$ for all $i,j \in S_d$.
One easily sees the following:
The conditional entropy of ordinal patterns is represented as $\CE(X, d) = H(P)$, where
\begin{equation*}
H(P) = -\sum\limits_{i\in S_d} \sum\limits_{j\in S_d}p_{i,j} \ln p_{i,j} + \sum\limits_{i\in S_d}\bigg(\sum\limits_{j\in S_d}p_{i,j}\bigg)\ln\sum\limits_{j\in S_d}p_{i,j}.
\end{equation*}
Here recall that $p_i=\sum\limits_{j\in S_d}p_{i,j}$.

\begin{theorem}\label{CEofOP_change}
Let $Y=(Y_t)_{t\in {\mathbb N}}$ and $Z=(Z_t)_{t\in {\mathbb N}}$ be ergodic $d^+$-ordinal-statio\-nary stochastic processes on a probability space $(\Omega, \mathcal{A}, \mathbb{P})$
with probabilities of pairs of ordinal patterns of order $d\in \mathbb{N}$ given by
$P = (p_{i,j})_{i,j\in S_d}$ and $Q = (q_{i,j})_{i,j\in S_d}$, respectively.
For $L\in \mathbb{N}$ and $\gamma \in (0,1)$, let $\Pi_{L,\gamma}$ be the abstract sequence of ordinal patterns of order $d$ of
\begin{equation}\label{seqq}
(Y_1,\ldots ,Y_{\lfloor \gamma L \rfloor},Z_{\lfloor \gamma L \rfloor +1},Z_{\lfloor \gamma L \rfloor +2},\ldots ,Z_L).
\end{equation}
Then, for all $\theta \in (0,1)$ it holds
\begin{equation*}
\lim_{L \to \infty}\CEofOP\Big(\lfloor \theta L \rfloor; \Pi_{L,\gamma}\Big) = L\, \Delta^d_{\gamma,\theta}(P,Q)
\end{equation*}
$\mathbb{P}$-almost sure, where
\begin{equation*}
\Delta^d_{\gamma, \theta}(P,Q)\!=\!\begin{cases}
\!H\big(\gamma P + (1\!-\!\gamma)Q\big) - \theta H(P) - (1\!-\!\theta) H\big(\frac{\gamma - \theta}{1 - \theta}P + \frac{1 - \gamma}{1 - \theta}Q\big),&\!\!\theta < \gamma\\
\!H\big(\gamma P + (1\!-\!\gamma)Q\big) - \theta H\big(\frac{\gamma}{\theta}P + \frac{\theta-\gamma}{\theta}Q\big) - (1\!-\!\theta)H(Q),               &\!\!\theta \geq \gamma
\end{cases}.
\end{equation*}
\end{theorem}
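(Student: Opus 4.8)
The plan is to reduce the asymptotic statement to the convergence of each of the three empirical conditional entropies appearing in \eqref{CEofOPstat} to an honest conditional entropy $H(\cdot)$ of a suitable limiting distribution of pairs of ordinal patterns, and then to collect the three limits algebraically. First I would recall that, by the definition \eqref{CEofOPstat},
\begin{align*}
\CEofOP\Big(\lfloor \theta L \rfloor; \Pi_{L,\gamma}\Big) =&\,(L - 2d)\,\eCE\Big(\big(\Pi(k)\big)_{k=d}^L\Big)\\
&- (\lfloor \theta L \rfloor - d)\,\eCE\Big(\big(\Pi(k)\big)_{k=d}^{\lfloor \theta L \rfloor}\Big)\\
&- \big(L -(\lfloor \theta L \rfloor + d) \big)\,\eCE\Big(\big(\Pi(k)\big)_{k=\lfloor \theta L \rfloor+d}^L\Big),
\end{align*}
so after multiplying by $L^{-1}$ the coefficients $(L-2d)/L$, $(\lfloor\theta L\rfloor-d)/L$ and $(L-\lfloor\theta L\rfloor-d)/L$ tend to $1$, $\theta$ and $1-\theta$ respectively, and it suffices to show that the three $\eCE$ terms converge almost surely to $H$ of the appropriate mixtures. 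The mixtures arise because, in the sequence \eqref{seqq}, a proportion $\gamma$ of the points follow the law of $Y$ and a proportion $1-\gamma$ that of $Z$; the relevant empirical pair-frequencies therefore converge to convex combinations of $P$ and $Q$ with weights determined by how the window $[d,L]$, $[d,\lfloor\theta L\rfloor]$ or $[\lfloor\theta L\rfloor+d,L]$ overlaps the break at $\lfloor\gamma L\rfloor$. This is precisely the case split in the statement: for $\theta<\gamma$ the middle window lies entirely in the $Y$-regime (limit $H(P)$) while the right window is a $\tfrac{\gamma-\theta}{1-\theta}$:$\tfrac{1-\gamma}{1-\theta}$ mixture, and symmetrically for $\theta\ge\gamma$; the full window always sees the $\gamma$:$(1-\gamma)$ mixture.

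The technical core is a law-of-large-numbers for the pair-frequencies $n_{i,j}(\cdot)$. Fix a window of indices whose length grows like $cL$ and which, asymptotically, contains a fraction $\alpha$ of $Y$-governed transitions and $1-\alpha$ of $Z$-governed ones, plus at most $O(1)$ "boundary" transitions straddling $\lfloor\gamma L\rfloor$ (there are at most $d$ such spanning $(d+1)$-windows, a negligible $O(1)$ contribution). On the $Y$-part I would invoke ergodicity of $Y$ together with Lemma~\ref{OPseqIsStat}/Corollary~\ref{CEempiricalConv}-type reasoning: the abstract sequence $\Pi^d$ of ordinal patterns of an ergodic $d^+$-ordinal-stationary process is itself a stationary ergodic process (ordinal patterns, and pairs of them, are fixed block functionals of the increment process), so by Birkhoff's ergodic theorem the empirical frequency of each pair $(i,j)$ converges $\mathbb P$-a.s.\ to $p_{i,j}$; likewise on the $Z$-part to $q_{i,j}$. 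Hence the empirical pair-distribution of the whole window converges a.s.\ to $\alpha P+(1-\alpha)Q$, and since $\eCE$ is, by \eqref{empiricalCEeq}, a fixed continuous function of the normalized pair-counts $\big(n_{i,j}/(\text{length})\big)_{i,j}$ on the compact simplex (continuity at the boundary being handled by the convention $0\ln 0:=0$), the continuous mapping theorem gives $\eCE\to H(\alpha P+(1-\alpha)Q)$. Applying this to the three windows with $\alpha=\gamma$, $\alpha=1$ (resp.\ $\alpha=\tfrac{\gamma}{\theta}$), and $\alpha=\tfrac{\gamma-\theta}{1-\theta}$ (resp.\ $\alpha=0$) yields exactly the three terms of $\Delta^d_{\gamma,\theta}(P,Q)$.

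The main obstacle I anticipate is bookkeeping the boundary transitions cleanly and being careful about the direction of the inequality $\theta \lessgtr \gamma$: one must check that in each regime the relevant window's $Y$-fraction really is the claimed ratio (e.g.\ for $\theta<\gamma$ the right window $[\lfloor\theta L\rfloor+d,L]$ has length $\sim(1-\theta)L$ of which $\sim(\gamma-\theta)L$ is $Y$-governed, giving weight $\tfrac{\gamma-\theta}{1-\theta}$), and that the finitely many spanning patterns contribute $o(L)$ to every count and hence vanish after dividing by the window length. A secondary point is justifying the continuous-mapping step uniformly: since $\eCE$ is a finite sum of terms $-x\ln(x/y)$ which are bounded and continuous on the simplex (with the stated conventions), almost-sure convergence of the argument vectors transfers to almost-sure convergence of $\eCE$ without further integrability hypotheses. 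Everything else is the algebraic identity assembling $1\cdot H(\gamma P+(1-\gamma)Q)-\theta\cdot(\cdot)-(1-\theta)\cdot(\cdot)$, which matches $\Delta^d_{\gamma,\theta}(P,Q)$ by inspection.
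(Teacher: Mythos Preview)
Your proposal is correct and is precisely the ``simple computation'' the paper alludes to: the paper explicitly omits the proof of this theorem, writing only ``We omit the proof of Theorem~\ref{CEofOP_change} since it is a simple computation.'' What you have written---dividing by $L$, identifying the limiting mixture weights $\alpha$ for each of the three windows via the ergodic theorem on pair-frequencies, passing through the continuity of $\eCE$ as a function of the normalized counts, and then collecting the three terms---is exactly that computation spelled out, so there is nothing to compare against and no gap to flag.
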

By definition, \eqref{seqq} is a stochastic process of length $L+1$
with a potential ordinal change-point $t^\ast=\lfloor \theta L \rfloor$, i.e.~the position of $t^\ast$ relative to $L$ is principally the same for all $L$, and the statistics considered are stabilizing for increasing $L$. \eqref{seqq} can be particularly interpreted as a part of a stochastic process including exactly one ordinal chance point.
We omit the proof of Theorem \ref{CEofOP_change} since it is a simple computation.

Due to the properties of the conditional entropy, it holds
\begin{equation*}
\max_{\theta \in (0,1)} \Delta^d_{\gamma,\theta}(P,Q) = \Delta^d_{\gamma,\gamma}(P,Q) = H\big(\gamma P + (1-\gamma)Q\big) - \gamma H(P) - (1 - \gamma)H(Q).
\end{equation*}

Values of $\Delta^d_{\gamma,\theta}(P,Q)$ can be computed for a piecewise stationary stochastic process
with known probabilities of ordinal patterns before and after the change-point\footnote{To apply Theorem~\ref{CEofOP_change}
one needs probabilities of pairs of ordinal patterns of order $d$.
They can be calculated from the probabilities of ordinal patterns of order $(d+1)$:
as one can easily verify, probability $P_{i,j}$ of any pair $(i, j)$ of ordinal patterns $i,j \in S_d$
is equal either to the probability of a certain ordinal pattern of order $(d+1)$ or to the sum of two such probabilities.}.
In \cite{BandtShiha2007} authors compute probabilities of ordinal patterns of orders $d = 2$ (Proposition~5.3) and $d = 3$ (Theorem~5.5)
for stationary Gaussian processes (in particular, for autoregressive processes).
Here we use these results to provide Example~\ref{CEofOPARprocess} illustrating Theorem~\ref{CEofOP_change}.

\begin{example}\label{CEofOPARprocess}
Consider an autoregressive process $\AR\big((\phi_1, \phi_2), t^\ast\big)$ with a single change-point $t^\ast = L/2$ for $L \in \mathbb{N}$.
Using the results from \cite{BandtShiha2007} we compute distributions $P^{\phi_1}$, $P^{\phi_2}$ of ordinal pattern pairs for orders $d = 1,\,2$ and on this basis
we calculate the values of $\Delta^d_{0.5, 0.5}\mleft( P^{\phi_1}, P^{\phi_2} \mright)$ for different values of $\phi_1$ and $\phi_2$.
The results are presented in Tables~\ref{CEofOPforAR_d1_tbl} and~\ref{CEofOPforAR_d2_tbl}.
\begin{table}[!th]
\centering
\begin{tabular}{ l | l l l l l l l l l l l}
\toprule
\backslashbox{$\phi_2$}{$\phi_1$} &0.00& 0.10& 0.20& 0.30& 0.40& 0.50& 0.60& 0.70& 0.80& 0.90& 0.99\\
\midrule
0.00			      &0   & 0.02& 0.07& 0.15& 0.26& 0.40& 0.56& 0.74& 0.95& 1.18& 1.44\\
0.10			      &0.02& 0   & 0.02& 0.06& 0.14& 0.25& 0.37& 0.53& 0.71& 0.91& 1.13\\
0.20			      &0.07& 0.02& 0   & 0.02& 0.06& 0.13& 0.23& 0.36& 0.51& 0.68& 0.88\\
0.30			      &0.15& 0.06& 0.02& 0   & 0.01& 0.06& 0.13& 0.22& 0.34& 0.49& 0.66\\
0.40			      &0.26& 0.14& 0.06& 0.01& 0   & 0.01& 0.06& 0.12& 0.22& 0.33& 0.48\\
0.50			      &0.40& 0.25& 0.13& 0.06& 0.01& 0   & 0.01& 0.05& 0.12& 0.21& 0.33\\
0.60			      &0.56& 0.37& 0.23& 0.13& 0.06& 0.01& 0   & 0.01& 0.05& 0.12& 0.21\\
0.70			      &0.74& 0.53& 0.36& 0.22& 0.12& 0.05& 0.01& 0   & 0.01& 0.05& 0.12\\
0.80			      &0.95& 0.71& 0.51& 0.34& 0.22& 0.12& 0.05& 0.01& 0   & 0.01& 0.05\\
0.90			      &1.18& 0.91& 0.68& 0.49& 0.33& 0.21& 0.12& 0.05& 0.01& 0   & 0.01\\
0.99			      &1.44& 1.13& 0.88& 0.66& 0.48& 0.33& 0.21& 0.12& 0.05& 0.01& 0   \\
\bottomrule
\end{tabular}\vspace{2mm}
\caption{Values of $100\,\Delta^1_{0.5, 0.5}\mleft( P^{\phi_1}, P^{\phi_2} \mright)$ for an autoregressive process (coefficient $100$ here is only for the sake of readability)}
\label{CEofOPforAR_d1_tbl}
\end{table}

\begin{table}[!th]
\centering
\begin{tabular}{ l | l l l l l l l l l l l}
\toprule
\backslashbox{$\phi_2$}{$\phi_1$} &0.00& 0.10& 0.20& 0.30& 0.40& 0.50& 0.60& 0.70& 0.80& 0.90& 0.99\\
\midrule
0.00			      &0   & 0.04& 0.15& 0.33& 0.56& 0.85& 1.18& 1.55& 1.95& 2.40& 2.88\\
0.10			      &0.04& 0   & 0.04& 0.14& 0.31& 0.53& 0.80& 1.12& 1.48& 1.89& 2.34\\
0.20			      &0.15& 0.04& 0   & 0.03& 0.13& 0.29& 0.51& 0.77& 1.08& 1.44& 1.85\\
0.30			      &0.33& 0.14& 0.03& 0   & 0.03& 0.13& 0.28& 0.49& 0.75& 1.06& 1.43\\
0.40			      &0.56& 0.31& 0.13& 0.03& 0   & 0.03& 0.12& 0.27& 0.48& 0.74& 1.06\\
0.50			      &0.85& 0.53& 0.29& 0.13& 0.03& 0   & 0.03& 0.12& 0.27& 0.48& 0.74\\
0.60			      &1.18& 0.80& 0.51& 0.28& 0.12& 0.03& 0   & 0.03& 0.12& 0.27& 0.48\\
0.70			      &1.55& 1.12& 0.77& 0.49& 0.27& 0.12& 0.03& 0   & 0.03& 0.12& 0.28\\
0.80			      &1.95& 1.48& 1.08& 0.75& 0.48& 0.27& 0.12& 0.03& 0   & 0.03& 0.13\\
0.90			      &2.40& 1.89& 1.44& 1.06& 0.74& 0.48& 0.27& 0.12& 0.03& 0   & 0.03\\
0.99			      &2.88& 2.34& 1.85& 1.43& 1.06& 0.74& 0.48& 0.28& 0.13& 0.03& 0   \\

\bottomrule\vspace{2mm}
\end{tabular}

\caption{Values of $100\,\Delta^2_{0.5, 0.5}\mleft( P^{\phi_1}, P^{\phi_2} \mright)$ for an autoregressive process}
\label{CEofOPforAR_d2_tbl}
\end{table}

According to Theorem~\ref{CEofOP_change}, for $\pi^{d,L} = \big(\pi(k)\big)_{k=d}^L$ being a sequence of ordinal patterns of order $d$ for a realization of
$\AR\big((\phi_1, \phi_2), L/2\big)$ it holds almost sure that
\begin{equation*}
\frac{1}{L}\max_{\theta \in (0,1)} \CEofOP\big(\lfloor \theta L \rfloor; \pi^{d,L}\big) \xrightarrow[L \to \infty]{} \Delta^d_{0.5, 0.5}\mleft( P^{\phi_1}, P^{\phi_2} \mright).
\end{equation*}
Figure~\ref{CEofOPforAR_conv_fig} shows how fast this convergence is.
Note that the $\CEofOP$ statistic for orders $d=1,2$ allows to distinguish between change and no change in the considered processes for $L \geq 20\cdot10^3$.
For $L = 10^5$ the values of the $\CEofOP$ statistic for order $d=2$ is already very close to its theoretical values, whereas for $d=1$ this length does not seem sufficient.

\begin{figure}[!th]
\centering
\begin{minipage}[h]{0.49\hsize}
\centering
\includegraphics[scale=0.41]{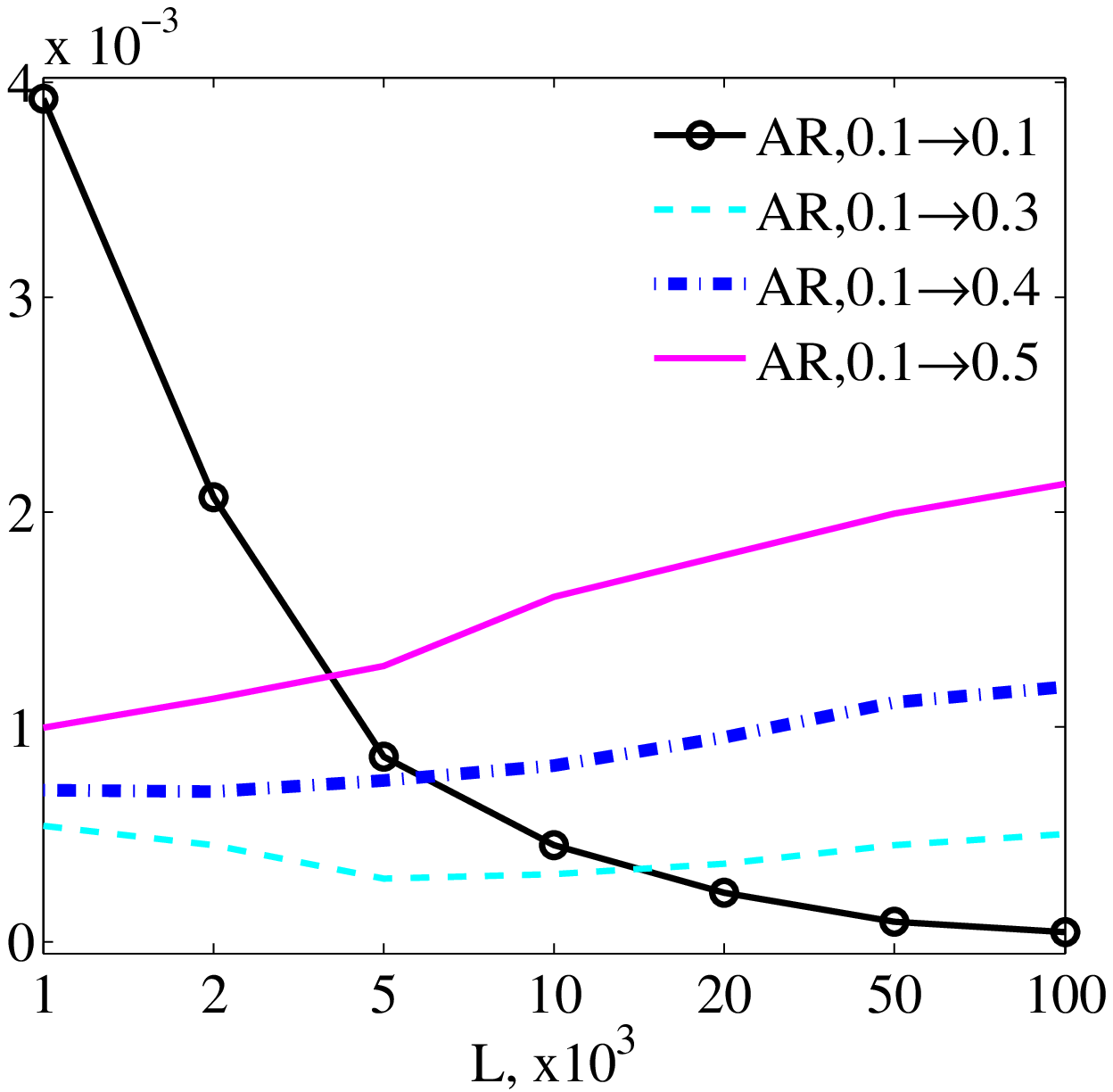}
	
(a)
\end{minipage}
\begin{minipage}[h]{0.49\hsize}
\centering
\includegraphics[scale=0.41]{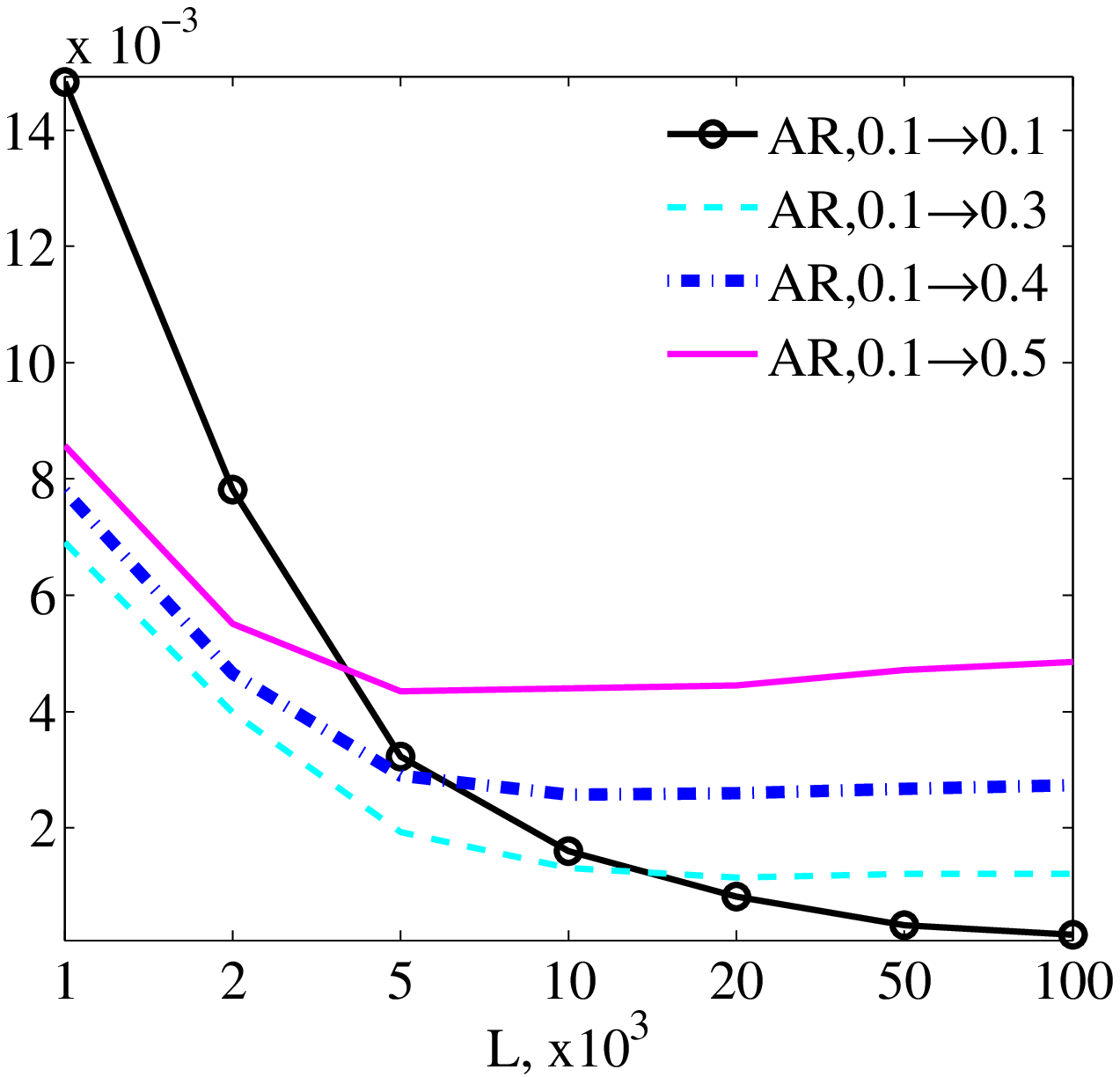}
	
(b)
\end{minipage}

\caption{Convergence of $\frac{1}{L}\max\limits_{\theta \in (0,1)} \CEofOP\big(\lfloor \theta L \rfloor; \pi^{d,L}\big)$
to the theoretical values $\Delta^d_{0.5, 0.5}$ as $L$ increases for autoregressive processes for $d=1$ (a) and $d=2$ (b).
Here AR, $\phi_1\rightarrow \phi_2$ stands for the process $\AR\big((\phi_1, \phi_2), L/2\big)$.
The provided empirical values are obtained either as 5th percentile (for $\phi_1 \neq \phi_2$) or as 95th percentile (for $\phi_1 = \phi_2$) from 1000 trials}
\label{CEofOPforAR_conv_fig}
\end{figure}
\end{example}

The following result is a simple consequence of Theorem~\ref{CEofOP_change}.
\begin{corollary}\label{CEofOP_nochange}
Let $X=(X_t)_{t\in {\mathbb N}_0}$ be an ergodic $d^+$-ordinal-statio\-nary stochastic process on a probability space $(\Omega, \mathcal{A}, \mathbb{P})$.
For $L\in \mathbb{N}$ let $\Pi^{d,L}$ be the abstract sequence of ordinal patterns of order $d$ of
$(X_0,X_1,\ldots ,X_L)$.
Then for any $\theta \in (0,1)$ it holds
\begin{equation}\label{CEofOP_nochange_eq}
\lim_{L \to \infty}\CEofOP\Big(\lfloor \theta L \rfloor; \Pi^{d,L} \Big) = 0
\end{equation}
$\mathbb{P}$-almost sure.
\end{corollary}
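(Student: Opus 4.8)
The plan is to obtain the corollary by specialising Theorem~\ref{CEofOP_change} to the degenerate situation in which the stochastic processes ``before'' and ``after'' the change coincide. Concretely, I would apply Theorem~\ref{CEofOP_change} with $Y = Z = X$ and any fixed $\gamma \in (0,1)$. Then $Q = P$, and the word \eqref{seqq} reduces to $(X_1, X_2, \ldots, X_L)$, which --- up to the harmless relabelling $t \mapsto t-1$ and the inclusion of the single extra initial point $X_0$ --- is exactly the word underlying the abstract sequence $\Pi^{d,L}$ of ordinal patterns of $X$; thus $\Pi_{L,\gamma}$ may be identified with $\Pi^{d,L}$. Theorem~\ref{CEofOP_change} then yields, for every $\theta \in (0,1)$,
\[
\lim_{L \to \infty}\CEofOP\big(\lfloor \theta L \rfloor; \Pi^{d,L}\big) = L\,\Delta^d_{\gamma,\theta}(P,P)
\]
$\mathbb{P}$-almost surely, so that it only remains to evaluate $\Delta^d_{\gamma,\theta}(P,P)$.

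This last step is a one-line computation. In the formula defining $\Delta^d_{\gamma,\theta}(P,Q)$ every convex combination of $P$ and $Q$ that occurs has coefficients summing to one --- for instance $\tfrac{\gamma-\theta}{1-\theta} + \tfrac{1-\gamma}{1-\theta} = 1$ in the case $\theta < \gamma$, and $\tfrac{\gamma}{\theta} + \tfrac{\theta-\gamma}{\theta} = 1$ in the case $\theta \geq \gamma$ --- so after the substitution $Q = P$ each such combination collapses to $P$. Hence, in either case,
\[
\Delta^d_{\gamma,\theta}(P,P) = H(P) - \theta\,H(P) - (1-\theta)\,H(P) = 0,
\]
and \eqref{CEofOP_nochange_eq} follows at once.

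The only step that deserves a word of care --- and the closest thing to an obstacle --- is the identification of the word \eqref{seqq} for $Y=Z=X$ with the word $(X_0, X_1, \ldots, X_L)$ that underlies $\Pi^{d,L}$, since the two differ by a bounded number of entries at the ends. I would handle this by working from the explicit representation \eqref{CEofOPstraight} together with the convergence $n_{i,j}(t)/(t-d) \to p_{i,j}$ and $n_i(t)/(t-d)\to p_i$ supplied by the pointwise ergodic theorem applied to the (stationary, ergodic) sequence of consecutive ordinal patterns of $X$, exactly as in Corollary~\ref{CEempiricalConv}; modifying finitely many entries changes each of the three empirical conditional entropies in \eqref{CEofOPstat} in a way that does not affect the almost-sure limit. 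I do not expect any genuine difficulty here, which is consistent with the statement being, as claimed, a simple consequence of Theorem~\ref{CEofOP_change}.
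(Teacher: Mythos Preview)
Your proposal is correct and follows exactly the route the paper intends: the paper offers no explicit proof but states that the corollary ``is a simple consequence of Theorem~\ref{CEofOP_change},'' and the only natural way to read this is precisely the specialisation $Y=Z=X$ (hence $Q=P$) you carry out, together with the observation $\Delta^d_{\gamma,\theta}(P,P)=0$. Your handling of the boundary discrepancy between $(X_1,\ldots,X_L)$ and $(X_0,\ldots,X_L)$ is more careful than the paper itself, which tacitly ignores this point.
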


\section*{Acknowledgement}

This work was supported by the Graduate School for Computing in Medicine and Life Sciences
funded by Germany's Excellence Initiative [DFG GSC 235/1].

\ifArxiv	
\bibliographystyle{plain}
\else		
\bibliographystyle{gNST}
\fi
\bibliography{UK2015}

\end{document}